\theoremstyle{plain}
\newtheorem{theorem}{Theorem}[section]
\newtheorem{lemma}[theorem]{Lemma}
\theoremstyle{definition}
\newtheorem{remark}[theorem]{Remark}
\numberwithin{equation}{section}
\def\be{\begin{equation}}
\def\ee{\end{equation}}
\begin{document}

\title[Optimal regularity of Minimal Graphs]
{Optimal regularity of Minimal Graphs\\ in the Hyperbolic Space}
\author{Qing Han}
\address{Beijing International Center for Mathematical Research\\
Peking University\\
Beijing, 100871, China} \email{qhan@math.pku.edu.cn}
\address{Department of Mathematics\\
University of Notre Dame\\
Notre Dame, IN 46556} \email{qhan@nd.edu}
\author{Weiming Shen}
\address{School of Mathematical Sciences\\
Peking University\\
Beijing, 100871, China}
\address{Beijing International Center for Mathematical Research\\
Peking University\\
Beijing, 100871, China} \email{wmshen@pku.edu.cn}
\author{Yue Wang}
\address{School of Mathematical Sciences\\
Peking University\\
Beijing, 100871, China}
\address{Beijing International Center for Mathematical Research\\
Peking University\\
Beijing, 100871, China} \email{1201110027@pku.edu.cn}
\begin{abstract}
We discuss the global regularity of solutions $f$ to the Dirichlet problem 
for minimal graphs in the hyperbolic space 
when the boundary of the domain $\Omega\subset\mathbb R^n$ has a nonnegative mean curvature and 
prove an optimal regularity $f\in  C^{\frac{1}{n+1}}(\bar{\Omega})$.
We can improve the H\"older exponent for $f$ if certain combinations of principal curvatures of the boundary 
do not vanish, a phenomenon observed by F.-H. Lin. 
\end{abstract}

\thanks{The first author acknowledges the support of NSF
Grant DMS-1404596. }
\maketitle

\section{Introduction}\label{sec-Intro}

Anderson \cite{Anderson1982Invent}, \cite{Anderson1983}
studied complete area-minimizing submanifolds and proved that, for any given closed
embedded $(n-1)$-dimensional submanifold $N$ at the infinity of $\mathbb{H}^{n+1}$, there exists a
complete area minimizing integral $n$-current which is asymptotic to $N$ at infinity.
Hardt and Lin \cite{Hardt&Lin1987} discussed the $C^1$-boundary regularity of such
hypersurfaces. Subsequently, Lin \cite{Lin1989Invent} studied the higher order boundary regularity
for solutions to the Dirichlet problem for minimal graphs in the hyperbolic space. 

Assume that $\Omega\subset \mathbb{R}^{n}$ is a bounded domain. 
Lin \cite{Lin1989Invent} studied the Dirichlet problem of the form
\begin{align}\label{eq-main}
\begin{split}
\Delta f-\frac{f_{i}f_{j}}{1+|\nabla f|^{2}}f_{ij}+\frac{n}{f}&=0 \quad\text{in }\Omega,\\
f&=0 \quad \text{on }\partial\Omega,\\
f&>0 \quad\text{in }  \Omega.
\end{split}
\end{align} 
We note that the
equation in \eqref{eq-main} is a quasilinear non-uniformly elliptic
equation. It becomes singular on $\partial\Omega$ since $f = 0$ there. 
Lin \cite{Lin1989Invent} proved that  
\eqref{eq-main} admits a unique solution 
$f\in C(\bar{\Omega})\bigcap C^{\infty}(\Omega)$
if
$\Omega\subset \mathbb{R}^{n}$
is a $C^2$-domain with a nonnegative boundary mean curvature $H_{\partial\Omega} \geq 0$ 
with respect to the inward
normal direction of $\partial\Omega$. Moreover,
the graph of $f$ is a complete minimal hypersurface in the hyperbolic space $\mathbb{H}^{n+1}$ with
the asymptotic boundary $\partial\Omega$. 
Concerning the higher global regularity, Lin proved 
$f\in C^{1/2}(\bar{\Omega})$
if $H_{\partial\Omega} > 0$.  He also expected certain relations between the H\"older exponents for $f$ 
and the vanishing order
of $H_{\partial\Omega}$ at boundary points. (See Remark 3.7 \cite{Lin1989Invent}.)

The primary goal of this paper is to discuss the global regularity of the solution $f$ of \eqref{eq-main}. 
We first discuss the optimal regularity of $f$ in the general case $H_{\partial\Omega} \geq 0$ and prove 
$f\in  C^{\frac{1}{n+1}}(\bar{\Omega})$. We can improve 
the H\"older regularity of the solution if certain combinations of principal curvatures of the boundary 
do not vanish and thus establish a relation between the H\"older exponents for $f$ 
and principal curvatures of the boundary. We will also discuss the global regularity of the solution 
$f$ for certain domains with singularity. 

The first main result is given by the following theorem. 

\begin{theorem}\label{thrm-main}
Assume that $\Omega\subset \mathbb{R}^{n}$ is a bounded $C^2$-domain with $H_{\partial\Omega}\geq 0$ 
and that $f\in C(\bar\Omega)\cap C^\infty (\Omega)$ is the solution of \eqref{eq-main}. Then, 
$f\in  C^{\frac{1}{n+1}}(\bar{\Omega})$.  Moreover, 
$$[f]_{C^{\frac{1}{n+1}}(\bar{\Omega})} \leq [(n+1) \operatorname{diam}(\Omega)^{n}]^{\frac{1}{n+1}},$$ 
where $\operatorname{diam}(\Omega)$ is the diameter of $\Omega$.
\end{theorem}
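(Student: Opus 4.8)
\smallskip
\noindent\textbf{Strategy of proof.} Write $D=\operatorname{diam}(\Omega)$. The first step is to reduce the statement to the pointwise gradient bound
\[
f^{\,n}\,|\nabla f|\le D^{\,n}\qquad\text{in }\Omega .
\]
Granting this, $g:=f^{\,n+1}$ satisfies $|\nabla g|=(n+1)f^{\,n}|\nabla f|\le (n+1)D^{\,n}$ in $\Omega$, and since $g\in C(\bar\Omega)$ vanishes on $\partial\Omega$, a routine argument along straight segments — when a segment leaves $\Omega$, cut it at its first point of $\partial\Omega$, where $g=0$ — yields $|g(x)-g(y)|\le (n+1)D^{\,n}|x-y|$ for all $x,y\in\bar\Omega$. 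Using the elementary inequality $(a-b)^{n+1}\le a^{n+1}-b^{n+1}$ for $a\ge b\ge 0$, this gives $|f(x)-f(y)|^{n+1}\le (n+1)D^{\,n}|x-y|$, which is exactly the asserted bound on $[f]_{C^{1/(n+1)}(\bar\Omega)}$.

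To prove the gradient bound I would pass to the graph $\Sigma=\{(x,f(x)):x\in\Omega\}$, which by hypothesis is minimal in $\mathbb{H}^{n+1}$. With $W=\sqrt{1+|\nabla f|^2}$ and $f=x_{n+1}\big|_\Sigma$ regarded as a function on $\Sigma$, the equation in \eqref{eq-main} reads
\[
\Delta_\Sigma f=-\frac{n}{f\,W^2},\qquad\text{and a direct computation gives}\qquad |\nabla_\Sigma f|^2=1-\frac1{W^2}.
\]
The one‑dimensional solutions of \eqref{eq-main} satisfy $f^{\,2n}W^2\equiv\mathrm{const}$, which motivates introducing on $\Sigma$ the function
\[
G:=|\nabla_\Sigma f|^2+\Big(\frac fD\Big)^{2n}.
\]
Using $|\nabla_\Sigma f|^2=1-W^{-2}$, the inequality $G\le 1$ is equivalent to $f^{\,2n}W^2\le D^{\,2n}$, which forces $f^{\,2n}|\nabla f|^2=f^{\,2n}(W^2-1)\le D^{\,2n}$, i.e. the desired bound.

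It then remains to show $G\le 1$ on $\Sigma$, which I would do by the maximum principle. First, $G\le 2$, since $|\nabla_\Sigma f|<1$ and $0\le f\le D$. Secondly, as $f\to 0$ at the asymptotic boundary of $\Sigma$ (which lies over $\partial\Omega$) while $|\nabla_\Sigma f|^2<1$, one has $\limsup G\le 1$ there. The key point is that $G$ is a subsolution of a linear elliptic operator on $\Sigma$ with no zeroth‑order term: one computes $\Delta_\Sigma|\nabla_\Sigma f|^2$ by Bochner's formula, rewrites $\nabla_\Sigma\Delta_\Sigma f$ via $\Delta_\Sigma f=-\tfrac nf\big(1-|\nabla_\Sigma f|^2\big)$, uses the Gauss equation $\operatorname{Ric}_\Sigma(\nabla_\Sigma f,\nabla_\Sigma f)=-(n-1)|\nabla_\Sigma f|^2-|A\nabla_\Sigma f|^2$, and adds $D^{-2n}\Delta_\Sigma(f^{2n})$; after extracting the first‑order term $\tfrac{2n}f\langle\nabla_\Sigma f,\nabla_\Sigma G\rangle$ one checks that the zeroth‑order‑looking remainder cancels, while the genuinely negative contributions (the Ricci term and $|A\nabla_\Sigma f|^2$) are absorbed using $2|\nabla_\Sigma^2 f|^2\ge\tfrac2n(\Delta_\Sigma f)^2$ and Cauchy--Schwarz. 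The weight $D^{-2n}$ and exponent $2n$ are dictated by the one‑dimensional solutions, which are the equality case. Given this, the maximum principle on $\Omega$ (bounded, so $\bar\Omega$ compact) forces $G\le 1$, completing the argument.

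I expect the Bochner computation of the last paragraph — verifying that the curvature and Hessian terms combine with exactly the right dimensional constants — to be the main obstacle; the one‑dimensional case, where $f^{\,2n}W^2$ is literally constant, is strong evidence that it works. As a heuristic cross‑check one can also argue by barriers: the totally geodesic spherical caps $\sqrt{\rho^2-|x-p|^2}$ are exact solutions of \eqref{eq-main}, and comparing $f$ — or $f-t$ on a superlevel set $\{f>t\}$ — from above with a circumscribing cap that touches the graph at a prescribed point recovers $f^{\,n}|\nabla f|\le D^{\,n}$; making this uniform near $\partial\Omega$ for non‑convex $\Omega$ is delicate, which is precisely what the intrinsic maximum‑principle argument avoids.
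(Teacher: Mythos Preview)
Your overall architecture matches the paper's: both reduce Theorem~\ref{thrm-main} to the pointwise bound $f^{n}\sqrt{1+|\nabla f|^{2}}\le D^{n}$ (the paper proves the sharper version with $|f|_{L^\infty}$ in place of $D$; this is Lemma~2.4), and from there the H\"older estimate follows exactly as you describe --- the paper uses the same inequality $|a-b|^{n+1}\le|a^{n+1}-b^{n+1}|$ and the same case split $|x_1-x_2|\gtrless d_1$.

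The difference, and the gap, is in the proof of the gradient bound. The paper does \emph{not} use a Bochner argument on $\Sigma$; it works directly in Euclidean coordinates with $g_\varepsilon=\log\bigl(f^{n+\varepsilon}\sqrt{1+|\nabla f|^{2}}\bigr)$ and shows by a bare-hands computation with the operator $a_{ij}=\delta_{ij}-f_if_j/(1+|\nabla f|^{2})$ that at any interior maximum one has $a_{ij}g_{\varepsilon,ij}\ge \varepsilon f^{-2}\bigl(n+\varepsilon-\tfrac{f_1^2}{1+f_1^2}\bigr)>0$, which forces $\nabla f=0$ there. The $\varepsilon>0$ is essential on both ends: it makes this inequality strict, and it makes $F_\varepsilon=f^{n+\varepsilon}W\to 0$ at $\partial\Omega$ so that the maximum is interior. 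Establishing the latter is nontrivial --- the paper first proves $f\le Cd^{1/(n+1)}$ by an explicit barrier (Lemma~2.1) and then uses the Han--Jiang boundary expansion (Lemma~2.3) to control $u_t$ and conclude $F_\varepsilon\le f^{n+\varepsilon}+Cf^{\varepsilon}\to 0$.

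Your sketch has a concrete inconsistency that would have to be resolved before the Bochner route can even be attempted: the identities $\Delta_\Sigma f=-n/(fW^{2})$ and $|\nabla_\Sigma f|^{2}=1-W^{-2}$ hold for the \emph{Euclidean} induced metric on the graph, whereas the Gauss-equation formula $\operatorname{Ric}_\Sigma(V,V)=-(n-1)|V|^{2}-|AV|^{2}$ you invoke is the one for the \emph{hyperbolic} induced metric (that being the metric in which $\Sigma$ is minimal). In the Euclidean metric the graph has mean curvature $H_E=-1/(fW)\ne 0$, so $\operatorname{Ric}^E_\Sigma(V,V)=nH_E\langle A_EV,V\rangle-|A_EV|^{2}$, and the cancellation you are counting on has to be re-derived. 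Since you explicitly flag the Bochner step as ``the main obstacle'' and do not carry it out, this remains a genuine gap; the quantity $G$ and the paper's $g_0=\log(f^nW)$ do encode the same level sets, so the approach is not hopeless, but as written it is a heuristic rather than a proof.
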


We point out that the H\"{o}lder exponent $\frac{1}{n+1}$ is optimal. 
By Remark \ref{remark} below, we cannot improve the  regularity for $f$ 
in domains with nonnegative mean curvature in general. 
We also note that $n+1$ is the power of the first global term in the expansions of minimal graphs 
in the hyperbolic space.  See \cite{HanJiang2014}. However, if certain combinations of principal curvatures of the boundary 
do not vanish, then we can improve the global H\"older regularity. 
We will prove  in 
Theorem \ref{thrm-local-regularity} that solutions can be $C^{1/i}$ up to the boundary 
under appropriate conditions, for an even integer $2\le i\le n$. 
This confirms what Lin suggested in 
Remark 3.7 \cite{Lin1989Invent}.

The proof of Theorem \ref{thrm-main} is based on the maximum principle and 
the recent work of Han and Jiang \cite{HanJiang2014} on the boundary expansions for minimal graphs 
in the hyperbolic space.

We note that the estimate in Theorem \ref{thrm-main} does not depend on the regularity 
of the domain. This allows us to discuss \eqref{eq-main} in domains with singularity. 
Along this direction, we prove the following result. 

\begin{theorem}\label{thrm-reg-peiece-mean-conv-dom}
Let $\Omega\subset \mathbb{R}^{n}$ be a bounded convex domain 
which is the intersection of finitely many bounded convex $C^{2}$-domains $\Omega_i$ 
with $H_{\partial\Omega_i} > 0$, and  let 
$f\in C(\bar\Omega)\cap C^\infty (\Omega)$ be the solution of \eqref{eq-main}. 
Then $f\in C^{{1}/{2}}(\bar\Omega)$, and
$$[f]_{C^{{1}/{2}}(\bar{\Omega})} \leq C,$$
where $C$ is a positive constant depending only on $n$, 
$H_{\partial \Omega_i}$ and the diameter of $\Omega_i$.
\end{theorem}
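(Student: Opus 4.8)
The plan is to combine the global estimate from Theorem \ref{thrm-main}, which crucially does not require boundary regularity, with a barrier argument at each point of $\partial\Omega$ using the smooth domains $\Omega_i$ whose boundaries have strictly positive mean curvature. First I would recall that since $\Omega$ is convex and $f>0$ solves \eqref{eq-main} in $\Omega$ with $f=0$ on $\partial\Omega$, the comparison principle for the minimal surface operator in $\mathbb H^{n+1}$ applies: if $g$ is a supersolution (or a solution on a larger domain) that dominates $f$ on the common boundary, then $g\geq f$. The point is that for each $i$ we have $\Omega\subset\Omega_i$, so the solution $f_i\in C(\bar\Omega_i)\cap C^\infty(\Omega_i)$ of \eqref{eq-main} on $\Omega_i$ satisfies $f_i>0\geq f$ on $\partial\Omega$ near where $\partial\Omega$ touches $\partial\Omega_i$; more usefully, since $H_{\partial\Omega_i}>0$, Lin's result gives $f_i\in C^{1/2}(\bar\Omega_i)$.

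Next I would localize. Fix $x_0\in\partial\Omega$. Because $\Omega=\bigcap_i\Omega_i$, there is at least one index $i$ with $x_0\in\partial\Omega_i$. By the maximum principle $f\leq f_i$ in $\Omega$ (both vanish on the part of $\partial\Omega$ coming from $\partial\Omega_i$, and on the remaining part of $\partial\Omega$ we have $f=0<f_i$ since $\bar\Omega\subset\Omega_i$ there, actually one should be a little careful: $\bar\Omega\subset\bar\Omega_i$, and on $\partial\Omega\cap\Omega_i$ indeed $f_i>0=f$, while on $\partial\Omega\cap\partial\Omega_i$ both vanish, so $f\le f_i$ on $\partial\Omega$ and hence in $\Omega$). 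Therefore, using $f(x_0)=f_i(x_0)=0$ and the Hölder bound for $f_i$,
\[
f(x)=f(x)-f(x_0)\leq f_i(x)-f_i(x_0)\leq [f_i]_{C^{1/2}(\bar\Omega_i)}\,|x-x_0|^{1/2}
\]
for all $x\in\bar\Omega$. This controls $f$ from one side near every boundary point with the constant depending only on $n$, $H_{\partial\Omega_i}$, and $\operatorname{diam}(\Omega_i)$ via Lin's quantitative $C^{1/2}$ estimate.

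To upgrade this pointwise-at-the-boundary bound to a genuine $C^{1/2}(\bar\Omega)$ seminorm estimate, I would argue as follows. For $x,y\in\bar\Omega$, if $|x-y|\geq \tfrac12\operatorname{dist}(x,\partial\Omega)$ (or similarly with $y$), pick a nearest boundary point and use the boundary estimate just established together with the triangle inequality to bound $|f(x)-f(y)|$ by $C|x-y|^{1/2}$. In the complementary regime, $x$ and $y$ both lie well inside $\Omega$ at scale comparable to $|x-y|$, so interior gradient estimates for the minimal surface equation in $\mathbb H^{n+1}$ (which degenerate like $1/f\sim 1/\operatorname{dist}$, exactly matched by the $C^{1/2}$ boundary decay of $f$) give $|\nabla f|\leq C\operatorname{dist}(x,\partial\Omega)^{-1/2}$, and integrating along the segment from $x$ to $y$ yields again $|f(x)-f(y)|\leq C|x-y|^{1/2}$. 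Assembling the two cases gives the claimed seminorm bound with $C=C(n,H_{\partial\Omega_i},\operatorname{diam}(\Omega_i))$.

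The main obstacle I anticipate is the justification of the comparison principle $f\leq f_i$, since $f$ and $f_i$ solve a singular, non-uniformly elliptic equation and only agree in the limit on the boundary; one must either invoke a comparison principle valid for continuous-up-to-the-boundary solutions of \eqref{eq-main} (using that $f/f_i$ or $f-f_i$ attains no interior positive maximum because of the structure of the operator and the $n/f$ term, which is monotone decreasing in $f$), or approximate by solving on slightly shrunk smooth subdomains and passing to the limit. A secondary technical point is making the interior gradient estimate with the correct $\operatorname{dist}^{-1/2}$ blow-up rate quantitative and domain-independent; here one can again borrow the interior estimates underlying Theorem \ref{thrm-main} and the Han–Jiang expansion \cite{HanJiang2014}, rescaled at the distance to the boundary. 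Everything else is routine, and the convexity of $\Omega$ is used only to guarantee that \eqref{eq-main} is solvable on $\Omega$ and that the $\Omega_i$ can be taken convex so that the touching/containment geometry in the localization step holds cleanly.
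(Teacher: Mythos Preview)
Your barrier/comparison step is essentially the paper's own: dominate $f$ by the solution $f_i$ on the larger smooth domain $\Omega_i$ to obtain the one-sided growth $f\le C\,d^{1/2}$ near every boundary point. The gap is in the passage from this growth rate to the interior gradient bound $|\nabla f|\le C\,d^{-1/2}$. You write that the interior gradient estimate ``degenerates like $1/f$'' and that this is ``exactly matched by the $C^{1/2}$ boundary decay of $f$,'' but the inequality runs the wrong way: from $|\nabla f|\lesssim 1/f$ together with the \emph{upper} bound $f\le C\,d^{1/2}$ one only obtains a \emph{lower} bound on $|\nabla f|$. What you would actually need is $f\ge c\,d^{1/2}$, and this is precisely what may fail near the singular set of $\partial\Omega$ where several $\partial\Omega_i$ meet (and even at smooth points it requires the higher-order boundary expansion, not merely $C^2$). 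Nor does the scale invariance rescue the argument directly: setting $g(y)=r^{-1}f(x+ry)$ on the unit ball gives $g\ge 1/2$ from $f\ge d$, but $\sup g\sim r^{-1/2}\to\infty$, and interior gradient estimates for this quasilinear operator do not come with the linear dependence on $\sup g$ that you would need. The global bound of Lemma~\ref{lemma-Df=0} is likewise too weak here, since it only yields $f^{\,n}|\nabla f|\le C$, i.e.\ the $C^{1/(n+1)}$ information of Theorem~\ref{thrm-main}.

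The paper circumvents this by using a structural feature you do not invoke: convexity of $\Omega$ forces $f$ to be \emph{concave} (Theorem~\ref{thrm-ConvexDom}, via Kennington's concavity maximum principle), hence $\Delta f\le 0$ throughout $\Omega$. Theorem~\ref{thrm-regl-growth-rate} then converts the one-sided bound $f\le M d^{1/2}$ together with $\Delta f\le 0$ into a genuine $C^{1/2}$ seminorm estimate, by an $L^p$-energy iteration on test functions $\psi^{k}(1+|\nabla f|^{2})^{(k-1)/2}$ in the divergence form \eqref{eq-interg-by-part} followed by Sobolev embedding; the sign condition $\Delta f\le 0$ is exactly what makes the iteration close. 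This replaces your unproved pointwise gradient bound by an integral argument that needs only the upper barrier, and is where the convexity hypothesis does real work beyond guaranteeing solvability.
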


The equation in \eqref{eq-main} for $n=2$ also appears in the study of the Chaplygin gas. See 
\cite{Serre2009} for details. 

The paper is organized as follows. In Section \ref{sec-mean-convex}, we discuss 
\eqref{eq-main} in domains with nonnegative boundary mean curvature and prove 
Theorem \ref{thrm-main}. In Section \ref{sec-convex}, we discuss 
\eqref{eq-main} in convex domains and prove 
Theorem \ref{thrm-reg-peiece-mean-conv-dom}. In Section \ref{sec-EquivalentEquation}, 
we discuss the regularity of solutions of an equivalent form of the equation in \eqref{eq-main}, 
which appears in the study of the Chaplygin gas.

We would like to thank Xumin Jiang for many helpful comments and suggestions.

\section{General Mean Convex Domains}\label{sec-mean-convex}

We first note that, for $\Omega =B_{R}(x_0) =\{ x \in \mathbb{R}^{n}:|x-x_0|<R\}$, 
the unique solution of \eqref{eq-main} is given by
$$f_R(x) = (R^2 -|x-x_0|^2)^{\frac{1}{2}}.$$
Hence, for a domain $\Omega$ with $H_{\partial\Omega}\ge 0$, we have, by the maximum principle, 
$$ |f|_{L^{\infty}(\Omega)}\le \operatorname{diam}(\Omega).$$
We also note that the gradient of $f$ blows up near $\partial \Omega$.

Now we prove two lemmas. Throughout the proof, we denote by $d(x)$ 
the distance from $ x$ to $\partial \Omega$ and by $\Lambda$ 
the maximum of the absolute value of principle curvatures of $\partial\Omega$.

\begin{lemma}\label{lemma-f-order}
Assume that $\Omega\subset \mathbb{R}^{n}$ is a bounded $C^2$-domain with $H_{\partial\Omega}\geq 0$ 
and that $f\in C(\bar\Omega)\cap C^\infty (\Omega)$ is the solution of \eqref{eq-main}. 
Then,
$$f\leq Cd^{\frac{1}{n+1}}\quad\text{in }\Omega,$$ 
where $C$ is a positive constant depending only on $n$, $\Lambda$ and the diameter of $\Omega$.
\end{lemma}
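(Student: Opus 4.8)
The plan is to construct a supersolution of the equation in \eqref{eq-main} near the boundary that behaves like $d^{1/(n+1)}$ and then apply the comparison principle. The model to keep in mind is the exact solution $f_R(x)=(R^2-|x-x_0|^2)^{1/2}$ on a ball, which behaves like $\sqrt{d}$; but on a general mean-convex domain the mean curvature may vanish, so the best universal barrier is weaker. The key observation, drawn from the expansion theory of \cite{HanJiang2014}, is that $n+1$ is the power of the first \emph{global} term, so a barrier of the form $g=A\,\psi(d)^{1/(n+1)}$ (with $\psi$ comparable to $d$) should work once $A$ is chosen large in terms of $n$, $\Lambda$ and $\operatorname{diam}(\Omega)$.

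First I would fix a boundary point and work in the region $\{0<d(x)<\delta_0\}$, where $\delta_0$ depends only on $\Lambda$ so that $d$ is $C^2$ there with $|\nabla d|=1$ and $|\Delta d|\le (n-1)\Lambda/(1-\Lambda d)\le C(n,\Lambda)$; crucially, since $H_{\partial\Omega}\ge0$, the mean curvature of the level sets $\{d=t\}$ is $\ge 0$ up to a controlled error, i.e. $\Delta d\le C(n,\Lambda)\,d$ is false in general but $\Delta d$ is bounded and that is what we need. Then I would compute the operator $Q[g]:=\Delta g-\frac{g_ig_j}{1+|\nabla g|^2}g_{ij}+\frac{n}{g}$ on $g=A\,d^{1/(n+1)}$. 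Writing $\alpha=1/(n+1)$, we have $\nabla g=A\alpha d^{\alpha-1}\nabla d$, so $|\nabla g|\to\infty$ as $d\to0$ and the quasilinear term becomes comparable to the full Laplacian in the normal direction; the dominant balance as $d\to 0$ is between the second-order normal derivative $g''\sim A\alpha(\alpha-1)d^{\alpha-2}$ (times a factor $\to$ something from $\frac{g_ig_j}{1+|\nabla g|^2}$, which tends to $1$) and the singular term $n/g\sim n/(A d^{\alpha})$. The choice $\alpha=1/(n+1)$ is exactly the one making these two terms scale like $d^{\alpha-2}=d^{-\frac{2n+1}{n+1}}$ with the \emph{same} power, and one checks the coefficient of the leading term is $A\alpha(\alpha-1)\cdot 1 + (\text{sign})$... more precisely, after multiplying through by $g$ one gets a leading coefficient that is negative for $A$ large because $\alpha-1<0$ dominates $+n$; this forces $Q[g]\le 0$, so $g$ is a supersolution, for $A$ depending only on $n$ and the bound $C(n,\Lambda)$ on the lower-order geometric terms.

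Having the barrier on the collar $\{0<d<\delta_0\}$, I would on the inner boundary $\{d=\delta_0\}$ compare $g=A\delta_0^{1/(n+1)}$ with $f$, using the a priori bound $|f|_{L^\infty(\Omega)}\le\operatorname{diam}(\Omega)$ already recorded before the lemma: enlarging $A$ (still depending only on $n,\Lambda,\operatorname{diam}(\Omega)$) makes $g\ge f$ there, while on $\partial\Omega$ both vanish. The comparison principle for the (degenerate but, away from $\{f=0\}$, uniformly elliptic) operator then gives $f\le g=Ad^{1/(n+1)}$ on the collar; on the complement $\{d\ge\delta_0\}$ the bound $f\le\operatorname{diam}(\Omega)\le (\operatorname{diam}(\Omega)/\delta_0^{1/(n+1)})\,d^{1/(n+1)}$ is trivial. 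Taking $C$ to be the larger of the two constants finishes the proof.

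The main obstacle I anticipate is twofold. The lesser issue is the singularity of the equation at $\{f=0\}=\partial\Omega$: one must justify the comparison principle even though the operator degenerates there, which is handled by comparing on $\{\varepsilon<d<\delta_0\}$ with $f$ replaced by $f-\varepsilon'$ and letting $\varepsilon,\varepsilon'\to0$, using the continuity of $f$ up to $\bar\Omega$. The more delicate issue is the sign computation for $Q[g]$: because $|\nabla g|\to\infty$, the factor $\frac{g_ig_j}{1+|\nabla g|^2}$ tends to the projection onto $\nabla d$ but only at rate $O(d^{2(1-\alpha)})$, and the lower-order terms ($\Delta d$, tangential second derivatives of $d$, cross terms) enter at order $d^{\alpha-1}$, which is a \emph{lower} order than the leading $d^{\alpha-2}$ — so they are harmless for small $\delta_0$, but one must track them carefully and check the leading coefficient is strictly negative (not just nonpositive), using $\alpha<1$, to absorb the $+n/g$ term and the errors. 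Getting the dependence of $\delta_0$ and $A$ to involve only $n$, $\Lambda$ and $\operatorname{diam}(\Omega)$ — and nothing finer about $\partial\Omega$ — is what makes the estimate robust enough for the later application to non-smooth domains.
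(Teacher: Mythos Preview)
Your barrier $g=A\,d^{1/(n+1)}$ does \emph{not} work, and the gap is precisely at the step where you assert that ``the leading coefficient is strictly negative for $A$ large.'' Write $\alpha=1/(n+1)$ and compute, using $|\nabla d|=1$ and $d_id_{ij}=0$,
\[
Q[g]=\frac{A\alpha(\alpha-1)d^{\alpha-2}}{1+A^2\alpha^2 d^{2\alpha-2}}
+A\alpha d^{\alpha-1}\Delta d+\frac{n}{A d^{\alpha}}
=\frac{1}{A d^{\alpha}}\cdot
\frac{\,n+A^2\alpha\big[(n+1)\alpha-1\big]d^{2\alpha-2}\,}{1+A^2\alpha^2 d^{2\alpha-2}}
+A\alpha d^{\alpha-1}\Delta d.
\]
At the critical exponent $(n+1)\alpha-1=0$, so the bracket reduces to the constant $n>0$, and the first summand is \emph{strictly positive} for every $A$ and every $d>0$. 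The only possible rescue would be the term $A\alpha d^{\alpha-1}\Delta d$; but while $H_{\partial\Omega}\ge 0$ gives $\Delta d\le 0$, it does not give $\Delta d<0$: near a flat portion of $\partial\Omega$ (exactly the situation of Remark~\ref{remark}, where the exponent $1/(n+1)$ is sharp) one has $\Delta d\equiv 0$, and then $Q[g]>0$. So the pure power is a strict \emph{sub}solution there, not a supersolution, and no enlargement of $A$ helps. Your heuristic that ``$\alpha-1<0$ dominates $+n$'' is exactly backwards: the critical exponent is singled out not by a scaling match but by the \emph{cancellation} $(\alpha-1)/\alpha+n=0$ of the two leading coefficients.

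The fix, which is what the paper does, is to perturb the power: take $\psi(d)=A\big(d^{p}-d^{q}\big)$ with $p=1/(n+1)$ and any $p<q<2-p$. Then the numerator $\psi\psi''+n(1+\psi'^2)$ equals
\[
n-A^2(q-p)(q+np)\,d^{p+q-2}+A^2q\big((n+1)q-1\big)d^{2q-2},
\]
and the new middle term is strictly negative and, for $d$ small (depending only on $n$ and the bound $\Lambda$ on the principal curvatures needed for the tubular neighborhood), dominates the residual $n$. After that, your Steps (matching on $\{d=\delta\}$ via $|f|_{L^\infty}\le\operatorname{diam}(\Omega)$, and the comparison principle) go through as you outlined. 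Note also that the useful consequence of $H_{\partial\Omega}\ge 0$ is $\Delta d\le 0$ in the collar (via convexity of $\kappa\mapsto \kappa/(1-\kappa d)$ and Jensen), which lets you drop the $\psi'\Delta d$ term with the correct sign; you should use this rather than merely a bound on $|\Delta d|$.
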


\begin{proof}
Set $w=\psi(d),$ for some function $\psi$ to be determined.  We will require $\psi>0$ and $\psi'>0$ 
on $(0,\delta)$, for some $\delta>0$. Then, 
\begin{align*}
w_{i} & =\psi'd_{i} ,\\
w_{ij}& =\psi'd_{ij}+\psi''d_{i}d_{j},\end{align*}
and hence, 
$$w_{i}w_{j}w_{ij} =\psi'^{2}d_{i}d_{j}(\psi'd_{ij}+\psi''d_{i}d_{j})=\psi'^{2}\psi'',$$
by $d_{i}d_{ij}=0.$ Therefore, 
\begin{align*}
&\Delta w-\frac{w_{i}w_{j}}{1+|\nabla w|^{2}}w_{ij}+\frac{n}{w}\\
&=\psi'\Delta d+\psi''-\frac{1}{1+\psi'^{2}}\psi''\psi'^{2}+\frac{n}{\psi}\\
&=\frac{1}{1+\psi'^{2}}\psi''+\frac{n}{\psi}+\psi'\Delta d\\
&\leq \frac{1}{1+\psi'^{2}}\psi''+\frac{n}{\psi},
\end{align*} where we used the assumption $H_{\partial\Omega}\geq 0 $ 
and the expansion of $\Delta d$ as in \cite{GT1983}. Set 
$$m(\psi)=\frac{\psi''\psi}{1+\psi'^{2}}+n.$$
Then, 
$$\Delta w-\frac{w_{i}w_{j}}{1+|\nabla w|^{2}}w_{ij}+\frac{n}{w}
\le \frac{1}{\psi}m(\psi).$$

In the following, we set 
$$\psi(d)=A[d^{p}-d^{q}],$$
for some positive constants $A$, $p$ and $q$ to be determined. Then,
\begin{align*}
\psi'& =A[pd^{p-1}-qd^{q-1}],\\
\psi''& =A[p(p-1)d^{p-2}-q(q-1)d^{q-2}].
\end{align*}
Hence, 
$$1+\psi'^{2}=1+A^{2}[p^{2}d^{2p-2}-2pqd^{q+p-2}+q^{2}d^{2q-2}],$$ 
and 
\begin{align*}
\psi''\psi+n(1+\psi'^{2})&=
n+A^{2}\big[p(p-1+pn)d^{2p-2}\\
&\qquad-(p^{2}+q^{2}-p-q+2pqn)d^{q+p-2}+q(q-1+nq)d^{2q-2}\big].\end{align*}
In the following, we set 
\begin{equation}\label{eq-expression-p}p=\frac{1}{n+1}.\end{equation}
Then, $p-1+pn=0$ and hence 
\begin{align*}
\psi''\psi+n(1+\psi'^{2})=
n+A^{2}\big[-h(q)d^{q+p-2}+q(q-1+nq)d^{2q-2}\big],\end{align*}
where 
$$h(q)=p^{2}+q^{2}-p-q+2pqn.$$
Then, 
$$m(\psi)=\frac{n+A^{2}\big[-h(q)d^{q+p-2}+q(q-1+nq)d^{2q-2}\big]}
{1+A^{2}[p^{2}d^{2p-2}-2pqd^{q+p-2}+q^{2}d^{2q-2}]}.$$
Note that $h(q)$ is a quadratic polynomial of $q$. 
With the expression of $p$ in \eqref{eq-expression-p}, we have 
\begin{align*}h(q)&=q^2+(2pn-1)q+p^2-p=q^2+(np-p)q-np^2\\
&=(q-p)(q+np).\end{align*}
Then, $h(q)>0$ for $q>p$. A simple rearrangement yields 
$$m(\psi)=d^{q-p}\cdot 
\frac{-A^{2}(q-p)(q+np)+nd^{2-p-q}+A^2q(q-1+nq)d^{q-p}}
{A^{2}p^{2}+d^{2-2p}-2A^2pqd^{q-p}+A^2q^{2}d^{2q-2p}}.$$
Now, we take $q$ such that 
$$p<q<2-p.$$
Then, we can take $\delta$ small depending only on $n$ and $\Lambda$ such that 
$m(\psi)\le 0$ for $d\in (0,\delta)$. 
Next, we choose $A$ large, depending only on $n$, $\Lambda$ and the 
$L^\infty(\Omega)$-norm of $f$, such that 
$$|f|_{L^{\infty}(\Omega)}\le A(\delta^{p}-\delta^{q})=\psi(\delta).$$
Hence, by the maximum principle, we have 
$f\leq \psi$ for $0<d<\delta$, 
and therefore 
$$f\leq Ad^{\frac{1}{n+1}}\quad\text{in }\{x\in\Omega:\, d(x)\leq \delta\}.$$
This implies the desired result. \end{proof}

Next, we proceed as Lin \cite{Lin1989Invent}. 
Locally near each boundary point, the graph of $f$ can be represented by a function 
over its vertical tangent plane. Specifically, 
we fix a boundary point of $\Omega$, say the origin, and assume that 
the vector $e_n=(0,\cdots, 0,1)$ is the interior normal vector to $\partial\Omega$ 
at the origin. Then, with $x=(x',x_n)$, the $x'$-hyperplane is the tangent plane of 
$\partial\Omega$ at the origin, and the boundary $\partial\Omega$ can be expressed 
in a neighborhood of the origin as a graph of a smooth function over $\mathbb R^{n-1}\times\{0\}$, 
say 
$$x_n=\varphi(x').$$
We now denote points in $\mathbb R^{n+1}=
\mathbb R^n\times\mathbb R$ by $(x',x_n,t)$. The vertical hyperplane 
given by $x_n=0$ is the tangent plane to the graph of $f$ at the origin in $\mathbb R^{n+1}$,
and we can represent the graph of $f$ as a graph of a new function $u$ defined in terms of 
$(x', 0, t)$ for small $x'$ and $t$, with $t>0$. In other words, we treat 
$\mathbb R^n=\mathbb R^{n-1}\times\{0\}\times\mathbb R$ as our new base space and write 
$u=u(x', t)$. Then, for some $R>0$,
$u$ satisfies 
\begin{align}\label{eq-Intro-Equ}
\Delta u - \frac{u_i u_j}{1+|\nabla u|^2}u_{ij}-\frac{n u_{t}}{t}=0  \quad \text{in } B_R^+,
\end{align}
and 
\begin{align}\label{eq-Intro-EquCondition}
u(\cdot, 0)=\varphi\quad\text{on }B_R'.
\end{align}
We note that $u$ and $f$ are related by 
\begin{align}\label{eq-u}
x_{n}=u(x',t),
\end{align}
and 
\begin{align}\label{eq-f}
t=f(x',x_n).
\end{align}
Set
\begin{align}\label{b1a-v}
u_{n+1}(x',t)=\varphi(x')+\sum_{i=2}^{n+1}c_i(x')t^i+c_{n+1,1}(x')t^{n+1}\log t.
\end{align}
In fact, $c_i=0$ for odd $i$ between $2$ and $n$
and $c_{n+1,1}=0$ for even $n$. We have the following result. 

\begin{lemma}\label{lemma-Expension-u}
For some constant $\alpha\in (0,1)$, 
let $\varphi\in C^{n+1,\alpha}(B'_R)$ be a given function 
and $u\in C(\bar B^+_R)\cap C^\infty(B^+_R)$ be 
a solution of \eqref{eq-Intro-Equ}-\eqref{eq-Intro-EquCondition}. Then, 
there exist functions  $c_i\in C^{n+1-i, \epsilon}(B_R')$, for $i=0, 2, 4, \cdots, n+1$, 
$c_{n+1,1}\in C^{\epsilon}(B_R')$, 
and any $\epsilon\in (0,\alpha)$, 
such that, 
for $u_{n+1}$ defined as in \eqref{b1a-v},
for any $m=0, 1, \cdots, n+1$, any $\epsilon\in (0,\alpha)$, and any $r\in (0, R)$, 
\begin{equation}\label{eq-MainRegularity}\partial_t^m (u-u_{n+1})\in C^{\epsilon}(\bar B^+_r),
\end{equation}
and, for any $(x',t)\in 
B^+_{R/2}$,  
\begin{equation}\label{eq-MainEstimate}|\partial_t^m (u-u_{n+1})(x',t)|
\le C t^{n+1-m+\alpha},
\end{equation}
for some positive constant $C$ depending only on $n$,  $\alpha$, $R$, 
the $L^\infty$-norm of $u$ in $B_R^+$ and the $C^{n+1, \alpha}$-norm  of $\varphi$ in 
$B_R'$.  
\end{lemma}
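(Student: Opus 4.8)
The plan is to establish the expansion \eqref{b1a-v} by the standard iteration scheme for singular boundary-value problems of this type, treating the weighted term $-nu_t/t$ in \eqref{eq-Intro-Equ} as the driving singularity. First I would rewrite the equation as $Lu = N(u)$, where $L u = \Delta u - \frac{n}{t} u_t$ is the model linear operator (whose homogeneous solutions in the $t$-variable are $1$ and $t^{n+1}$) and $N(u)$ collects the quasilinear correction $\frac{u_i u_j}{1+|\nabla u|^2}u_{ij}$. The key structural fact is that $L$ applied to $t^k$ (with $x'$-coefficients) produces $t^{k-2}$ times a constant $k(k-1) - nk = k(k-n-1)$, which is nonzero for every integer $k$ with $2 \le k \le n$, and vanishes precisely at $k = n+1$; this is exactly why a $\log t$ term enters at order $n+1$ and why $c_i$ loses regularity as $i$ increases. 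So the construction is: starting from $u_0 = \varphi$, inductively define $c_2, c_3, \dots$ by matching powers of $t$ in $L(\sum c_i t^i) = N(\text{lower-order partial sum})$, solving at each step an algebraic (not differential) equation for $c_i$ in terms of $x'$-derivatives of $\varphi$ and the previously determined $c_j$; this forces $c_i \in C^{n+1-i,\epsilon}$ since each step differentiates twice in $x'$, and it forces $c_{n+1,1}$ to appear (with only $C^\epsilon$ regularity) to absorb the resonance at $k=n+1$. The parity claim — $c_i = 0$ for odd $i \le n$ and $c_{n+1,1}=0$ for even $n$ — follows because $N(u)$ is, to the relevant order, even in $t$ when $u$ is even in $t$, so the odd coefficients are never sourced.

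Once the formal expansion $u_{n+1}$ is pinned down, the analytic heart is to show $u - u_{n+1}$ genuinely decays like $t^{n+1+\alpha}$ and that its $t$-derivatives up to order $n+1$ are Hölder continuous up to $\{t=0\}$. Here I would set $v = u - u_{n+1}$, note that $v$ satisfies $Lv = g$ where $g = N(u) - N(u_{n+1}) + (\text{error from } u_{n+1} \text{ not exactly solving})$, and by construction of the coefficients $g = O(t^{n-1+\alpha})$ in a suitable sense. Then one applies a barrier/maximum-principle argument for the singular operator $L$ — comparing $v$ with multiples of $t^{n+1+\alpha}$, exactly as in Lemma \ref{lemma-f-order} but now at the higher order — to get the pointwise bound \eqref{eq-MainEstimate} for $m=0$. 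The derivative estimates and the Hölder statement \eqref{eq-MainRegularity} then come from interior Schauder estimates for the quasilinear equation rescaled at dyadic distances from the boundary: on the ball $B_{t_0/2}(x', t_0)$ the rescaled function $w(y) = t_0^{-(n+1+\alpha)}(v(x' + t_0 y', t_0 + t_0 y_{n+1}) )$ satisfies a uniformly elliptic equation with controlled coefficients, so $\|w\|_{C^{2,\epsilon}} \le C$, and translating back gives $|\partial_t^m v| \le C t_0^{n+1-m+\alpha}$ together with the modulus of continuity needed for \eqref{eq-MainRegularity}; a telescoping/summation over dyadic scales upgrades this to continuity up to the boundary.

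The main obstacle I anticipate is the resonance at order $n+1$ and the bookkeeping it forces on regularity: because $k=n+1$ is a double indicial root of $L$, the naive ansatz $\sum c_i t^i$ cannot close, and one must introduce $c_{n+1,1}(x') t^{n+1}\log t$ and verify carefully that with this extra term the residual $g$ really is small enough (of order $t^{n-1+\alpha}$, i.e. the source for a $t^{n+1+\alpha}$ remainder) — this requires tracking how many $x'$-derivatives the quasilinear term $N$ costs, which is precisely what caps $c_i$ at $C^{n+1-i,\epsilon}$ and $c_{n+1,1}$ at $C^{\epsilon}$. A secondary difficulty is that the equation is only \emph{non-uniformly} elliptic in the original variables, but after the change of variables \eqref{eq-u}--\eqref{eq-f} to the vertical representation $u$, the operator in \eqref{eq-Intro-Equ} is uniformly elliptic away from $\{t=0\}$ with gradient bounds degenerating in a controlled way, so the dyadic-rescaling Schauder argument goes through; still, one must check that the rescaled coefficients stay in a fixed Hölder ball uniformly in $t_0$, which uses the a priori gradient decay of $u-\varphi$ that itself follows from the $m=0,1$ cases of \eqref{eq-MainEstimate}. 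I would expect this lemma to be essentially a reproduction, with the order pushed from $2$ (as in Lin) to the optimal $n+1$, of the expansion theory in \cite{HanJiang2014}, so the cleanest route is to invoke that reference for the hard estimates and only verify here the precise form of the leading coefficients and the parity vanishing.
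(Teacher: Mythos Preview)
The paper does not prove this lemma at all: it simply records that the statement is the case $\ell=k=n+1$ of Theorem~1.1 in \cite{HanJiang2014}, and notes that $c_2,\dots,c_n,c_{n+1,1}$ are local (determined algebraically by $\varphi$) while $c_{n+1}$ is the first nonlocal coefficient. Your proposal sketches exactly the machinery that lives inside that reference --- formal matching against the indicial polynomial of $L=\Delta-\tfrac{n}{t}\partial_t$, a barrier argument for the $m=0$ decay, and dyadic rescaled Schauder for the higher $t$-derivatives --- and you even conclude that the cleanest route is to invoke \cite{HanJiang2014}. So you are aligned with the paper; you are just doing (or proposing to do) the work the paper delegates.

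One small correction worth noting: $k=n+1$ is a \emph{simple} root of the indicial polynomial $k(k-n-1)$, not a double root. The $t^{n+1}\log t$ term arises because the quasilinear source, expanded to order $t^{n-1}$, resonates with that simple homogeneous solution; calling it a double root misidentifies the mechanism, though the conclusion (the log correction and the regularity cap $c_{n+1,1}\in C^\epsilon$) is unchanged.
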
 

Lemma \ref{lemma-Expension-u} follows from Theorem 1.1 in \cite{HanJiang2014}
by taking $\ell=k=n+1$. In fact, $c_2, \cdots, c_n$ and $c_{n+1,1}$ are coefficients 
for local terms and have explicit expressions in terms of $\varphi$. Meanwhile, 
$c_{n+1}$ is the coefficient of the first nonlocal term. 

\begin{remark}\label{remark}
The growth rate $d^{\frac{1}{n+1}}$ in Lemma \ref{lemma-f-order}
is optimal for $f$ in general domains with nonnegative boundary mean curvature. This can be seen 
as follows. Consider a $C^2$-domain $\Omega$ with $H_{\partial\Omega}\ge 0$ such that 
$B_{r}\bigcap\partial\Omega\subset \{x_n=0\}$
and $B_{r}\bigcap\Omega= B_r^+$, for some $r>0$. Then, $\varphi=0$ on 
$B'_r$. Hence, in the expression of $u_{n+1}$, 
we have $c_{i}=0$ for $i\leq n$ and $c_{n+1,1}=0.$ The estimate \eqref{eq-MainEstimate} with $m=0$
implies $|u|\leq Ct^{n+1}$ near the origin. Moreover, 
$x_n=u\ge 0$ in $B_{r}\bigcap\Omega$. Hence,  
$C^{-1}d^{\frac{1}{n+1}}\leq f$ near the origin. By combining with Lemma \ref{lemma-f-order}, 
we obtain 
$$C^{-1}d^{\frac{1}{n+1}}\le f \leq Cd^{\frac{1}{n+1}}\quad\text{near the origin}.$$
Therefore, the growth rate of $f$ is exactly $d^{\frac{1}{n+1}}$.  
\end{remark}

The next result plays a crucial role in the proof of Theorem \ref{thrm-main}. 

\begin{lemma}\label{lemma-Df=0} 
Assume that $\Omega\subset \mathbb{R}^{n}$ is a bounded $C^2$-domain 
with $H_{\partial\Omega}\geq 0$ 
and that $f\in C(\bar\Omega)\cap C^\infty (\Omega)$ is the solution of \eqref{eq-main}. 
Then, 
$$f^{n}\sqrt{1+|\nabla f|^{2}}\leq|f|_{L^{\infty}(\Omega)}^{n}\quad\text{in }\Omega.$$
\end{lemma}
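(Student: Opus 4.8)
The plan is to exploit the structure of the equation in \eqref{eq-main} by viewing the quantity $Q := f^n\sqrt{1+|\nabla f|^2}$ as a candidate for a maximum principle argument. First I would rewrite the PDE operator. Writing $g := \sqrt{1+|\nabla f|^2}$, note that
\[
\Delta f - \frac{f_i f_j}{1+|\nabla f|^2}f_{ij}
= g\,\operatorname{div}\!\Big(\frac{\nabla f}{g}\Big)
= g^3\Big(\delta_{ij} - \frac{f_i f_j}{g^2}\Big)f_{ij}\cdot g^{-2},
\]
so \eqref{eq-main} says exactly that the mean curvature of the graph of $f$ (in the Euclidean metric) equals $n/(f\,g)$, i.e. $g\,\operatorname{div}(\nabla f/g) = -n/f$. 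Thus $Q$ satisfies a scalar relation that should propagate via the linearized minimal-surface operator $\mathcal L v := (\delta_{ij} - f_i f_j/g^2)v_{ij}$.

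The key step is to derive a differential inequality for $Q$ of the form $\mathcal L(\log Q) \ge 0$ (or $\mathcal L Q \ge$ lower-order terms with the right sign), so that $\log Q$ — equivalently $Q$ — attains no interior maximum and the bound is controlled by boundary behavior. Concretely, I would compute $\mathcal L(\log f^n) = n\mathcal L(\log f)$ and $\mathcal L(\log g)$ separately. For the first, $\mathcal L(\log f) = \mathcal L f / f - |\nabla_{\mathcal L} f|^2/f^2$ where $\nabla_{\mathcal L}$ is the gradient in the induced metric; using $\mathcal L f = g\,\operatorname{div}(\nabla f/g)\cdot(\text{something}) = -n/(f g)\cdot g^2$-type identity from the equation, one gets an explicit negative-definite contribution. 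For the second term, $\log g$ on a minimal graph is related to $-\log$ of the vertical component of the unit normal, and it is classical (this is the computation behind Bernstein-type and gradient-estimate arguments for minimal graphs) that $\log g$ is superharmonic with respect to the Jacobi/stability operator on a minimal surface; here the surface is only "minimal" for the hyperbolic metric, so I must carefully track the extra curvature terms coming from the $n/f$ inhomogeneity. The hoped-for outcome is that the inhomogeneous terms from $\log f^n$ and $\log g$ cancel or combine with a favorable sign, yielding $\mathcal L Q \ge 0$.

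Having established the subsolution property, I would conclude as follows: since $\mathcal L$ is (degenerate) elliptic on $\Omega$ and $Q = f^n g$ is smooth in $\Omega$ by $f\in C^\infty(\Omega)$, the maximum of $Q$ over $\bar\Omega$ is attained on $\partial\Omega$. On $\partial\Omega$ we have $f = 0$, so $f^n = 0$ there; the only subtlety is that $g = \sqrt{1+|\nabla f|^2}$ blows up at $\partial\Omega$ (as already noted in the paper), so $Q$ is an indeterminate form $0\cdot\infty$ on the boundary. I would handle this by using Lemma \ref{lemma-f-order} and Lemma \ref{lemma-Expension-u}: near $\partial\Omega$, $f \sim d^{1/(n+1)}$ and, from the local graph representation, $t = f$ with $x_n = u(x',t) = O(t^{n+1})$-type expansion, which gives $|\nabla f| = O(f^{-n})$ and hence $f^n g = f^n\sqrt{1+|\nabla f|^2} \to$ a finite limit as one approaches $\partial\Omega$, in fact a limit bounded by $1$ from the ball barrier $f_R = (R^2-|x-x_0|^2)^{1/2}$ for which $f_R^n\sqrt{1+|\nabla f_R|^2} \equiv R^n$. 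Comparing, $\limsup_{x\to\partial\Omega} Q(x) \le |f|_{L^\infty(\Omega)}^n$, and the maximum principle for $Q$ then gives $Q \le |f|_{L^\infty(\Omega)}^n$ throughout $\Omega$.

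The main obstacle I anticipate is the curvature bookkeeping in the second paragraph: showing that $\log\big(f^n\sqrt{1+|\nabla f|^2}\big)$ is an $\mathcal L$-subsolution requires the "miraculous" cancellation of the inhomogeneous $n/f$ terms, and getting the signs right in the second-derivative computation of $\log g$ on the graph — essentially reproving a Simons-type identity adapted to the hyperbolic-minimal setting — is where the real work lies. An alternative, possibly cleaner, route avoiding Simons' identity is to interpret $Q = f^n g$ as the Euclidean volume density of the graph rescaled by $f^n$, i.e. the hyperbolic volume form, and to observe that $|f|_{L^\infty}^n$ bound is precisely a statement that the "hyperbolic area element projected down" is maximized at infinity; one could try a direct divergence-structure argument, testing the equation against a suitable cutoff of $Q$, but I expect the maximum-principle route above to be the most transparent.
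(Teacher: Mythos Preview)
Your strategy --- apply a maximum principle to $Q=f^n\sqrt{1+|\nabla f|^2}$ --- is the same starting point as the paper's, but the execution diverges at the decisive step, and your version has two concrete gaps.

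First, your boundary computation is wrong. For the ball solution $f_R=(R^2-|x|^2)^{1/2}$ one has $|\nabla f_R|=|x|/f_R$ and hence
\[
f_R^{\,n}\sqrt{1+|\nabla f_R|^2}=f_R^{\,n}\cdot\frac{R}{f_R}=R\,f_R^{\,n-1},
\]
which is $0$ on $\partial B_R$ and $R^n$ at the center --- not identically $R^n$. More generally, where $\partial\Omega$ has a flat piece (cf.\ Remark~\ref{remark}), the expansion gives $u(0,t)\sim c_{n+1}t^{n+1}$, so $|\nabla f|\sim 1/((n+1)c_{n+1}t^{n})$ and $Q\to 1/((n+1)c_{n+1})$ along the normal: a finite nonzero limit depending on the \emph{nonlocal} coefficient $c_{n+1}$, with no evident relation to $|f|_{L^\infty}$. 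So there is no reason to expect $\limsup_{x\to\partial\Omega}Q\le |f|_{L^\infty}^n$ a priori; your boundary step does not close.

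Second, you never establish the differential inequality; you defer it to a hoped-for ``miraculous cancellation.'' In fact the paper's computation (carried out only \emph{at a critical point} of $\log Q$, using $\nabla(\log Q)=0$ to simplify the Hessian) yields for $Q$ itself only a nonstrict inequality, which does not rule out an interior maximum with $\nabla f\neq 0$.

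The paper fixes both issues simultaneously with one idea: replace $Q$ by $F_\varepsilon=f^{\,n+\varepsilon}\sqrt{1+|\nabla f|^2}$ for $\varepsilon>0$. Step~1 uses Lemmas~\ref{lemma-f-order} and \ref{lemma-Expension-u} to show $F_\varepsilon\to 0$ on $\partial\Omega$ (the extra $f^\varepsilon$ kills the finite boundary limit above), so the maximum is attained at some interior $x_0$. Step~2 does \emph{not} try to push the maximum to the boundary; instead, assuming $\nabla f(x_0)\neq 0$, a direct computation at $x_0$ gives
\[
a_{ij}\,(\log F_\varepsilon)_{ij}(x_0)\ \ge\ \frac{\varepsilon}{f^2}\Big(n+\varepsilon-\frac{|\nabla f|^2}{1+|\nabla f|^2}\Big)\ >\ 0,
\]
contradicting maximality. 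Hence $\nabla f(x_0)=0$, so $F_\varepsilon(x_0)=f(x_0)^{\,n+\varepsilon}\le |f|_{L^\infty}^{\,n+\varepsilon}$; let $\varepsilon\to 0$. The upshot is the opposite of what you were aiming for: the maximum of $Q$ is \emph{interior}, at a point where $\nabla f$ vanishes, and the $\varepsilon$-perturbation is what makes both the boundary behavior and the strict second-order test work.
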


\begin{proof} We will prove, 
for any $\varepsilon>0,$ 
\begin{equation}\label{eq-estimate-epsilon}
f^{n+\varepsilon}\sqrt{1+|\nabla f|^{2}}\le |f|_{L^{\infty}(\Omega)}^{n+\varepsilon}
\quad\text{in }\Omega.\end{equation}
By letting $\varepsilon\to 0$, we have the desired result. 

We first consider the case that $\partial\Omega$ is smooth and set
$$F_{\varepsilon}=f^{n+\varepsilon}\sqrt{1+|\nabla f|^{2}}.$$
The proof consists of two steps.

{\it Step 1}. We will prove $F_{\varepsilon}\to 0$ as $x$ approaches $\partial\Omega.$  
Take any $x\in\Omega$.  Without loss of generality, take a coordinate 
such that $x=(0,x_{n})$, with $ x_{n}=d(x)$, and 
the origin is the nearest point on $\partial\Omega$ to $x.$ 
We can express $x_{n}$ by a function $x_n=u(x',t)$
as in \eqref{eq-u}, which satisfies \eqref{eq-Intro-Equ}. 

By Lemma \ref{lemma-f-order}, we have, for $x_n\in (0,\delta)$,  
$$f(0,x_n)\leq Ax_n^{\frac{1}{n+1}}.$$ 
Hence, 
\begin{equation}\label{eq-lower-bound}u(0,t)\geq Ct^{n+1},\end{equation} 
for some positive constant $C$. Therefore,
$x_{n}$ approaching $0$ is equivalent to 
$t$ approaching $0$. 

Next, we note $\varphi(0)=0$ since $\{x_n=0\}$ is tangent to $\partial\Omega$ 
at the origin. By \eqref{eq-lower-bound} and \eqref{eq-MainEstimate} with $m=0$, 
there is a nonzero term in the expression of $u_{n+1}(0,t)$ in \eqref{b1a-v}. We now write, 
for some $\alpha\in (0,1)$, 
$$u(0,t)=\sum_{i=2}^{n}c_i(0)t^i+c_{n+1,1}(0)t^{n+1}\log t+c_{n+1}(0)t^{n+1}+O(t^{n+1+\alpha}).$$
By $u(0,t)>0$ for small $t>0$, we note that either the first nonzero coefficient $c_i(0)$ is positive, for some 
$i=0, 1, \cdots, n+1$, or  $c_{n+1,1}(0)<0$ if $c_i(0)=0$ for any $i=2, \cdots, n$. 
Next, 
$$\aligned tu_t(0,t)&=\sum_{i=2}^{n}ic_i(0)t^i+(n+1)c_{n+1,1}(0)t^{n+1}\log t\\
&\qquad+\big[c_{n+1,1}(0)+(n+1)c_{n+1}(0)\big]t^{n+1}+O(t^{n+1+\alpha}).\endaligned$$
Therefore, we have, for $t$ small, 
$$\frac{tu_{t}(0,t)}{u(0,t)}>1.$$ 
By \eqref{eq-u} and \eqref{eq-f}, we get 
\begin{align*}
1 =u_{t}f_{x_{n}},\quad
0 =u_{x'}+u_{t}f_{x'}.
\end{align*} 
Hence, 
$$|u_t\nabla_xf|^2=1+|\nabla_{x'}u|^2.$$
As a result, we obtain
\begin{align*}
F_{\varepsilon}\leq f^{n+\varepsilon}+f^{n+\varepsilon}\frac{\sqrt{n}}{u_{t}}
\leq f^{n+\varepsilon}+C(n)f^{\varepsilon},
\end{align*}
where we used 
\begin{align*}
f^{n+\varepsilon}\frac{1}{u_{t}}<\frac{f^{n+\varepsilon}t}{u}=\frac{f^{n+1}}{d}f^{\varepsilon}
\leq Cf^{\varepsilon}.
\end{align*}
Hence, $F_{\varepsilon}\to0,$ as $x_{n}$ or $t$ approaches $0.$
We point out that it is important to have the extra power of $\varepsilon$. 

{\it Step 2}. By Step 1, we note that $F_{\varepsilon}$ attains its maximum at some 
$x_{0}$ in $\Omega.$ We will prove $\nabla f(x_{0})=0$ by contradiction. 
Without loss of generality, we assume $|\nabla f(x_{0})|=f_{1}(x_{0})\neq 0.$ 
Set 
$$g_\varepsilon=\log F_\varepsilon=\log( f^{n+\varepsilon}\sqrt{1+|\nabla f|^{2}}).$$ 
Then, $g_{\varepsilon}$ attains its maximum at $x_{0}.$ Hence, 
$g_{\varepsilon,i}(x_0)=0$ and $(g_{\varepsilon, ij}(x_0))\le 0$. 
A simple calculation yields 
\begin{align*}
g_{\varepsilon, i} = (n+\varepsilon)\frac{f_{i}}{f}+\frac{f_{k}f_{ki}}{1+|\nabla f|^{2}},
\end{align*}
and
\begin{align*}
g_{\varepsilon,ij}=(n+\varepsilon)\frac{f_{ij}}{f}-(n+\varepsilon)\frac{f_{i}f_j}{f^{2}}
+\frac{f_{ki}f_{kj}}{1+|\nabla f|^{2}}
+\frac{f_{kij}f_{k}}{1+|\nabla f|^{2}}-\frac{2f_kf_lf_{ki}f_{lj}}{(1+|\nabla f|^{2})^{2}}.
\end{align*}
In the following, we calculate at the point $x_{0}.$ 
By $g_{\varepsilon, i}=0$, $f_2=\cdots f_n=0$ and $f_1\neq 0$, we have 
$$\aligned f_{11}&=-\frac{n+\varepsilon}{f}(1+f_1^2),\\
f_{1i}&=0\, (i\neq 1).\endaligned 
$$

Set 
$$a_{ij}(p)=\delta_{ij}-\frac{p_{i}p_{j}}{1+|p|^{2}}.$$
Then, 
$$a_{ij}(\nabla f)f_{ij}=-\frac{n}{f}.$$ 
A simple differentiation yields 
\begin{align}\label{eq-aij}
a_{ij}f_{1ij}+a_{ij,p_{k}}f_{ij}f_{1k}=\frac{nf_{1}}{f^{2}}.
\end{align} 
By the assumption $|\nabla f|=f_{1}$ at $x_0$, 
we have 
$$a_{11}=\frac{1}{1+f_{1}^{2}},\quad 
a_{ii}=1\,(i\neq 1),\quad 
a_{ij}=0\,(i\neq j).$$
Moreover, a straightforward calculation yields 
$$\aligned 
&a_{11,p_{1}}=-\frac{2f_{1}}{(1+f_{1}^{2})^{2}},\quad 
a_{11,p_{i}}=0\, (i\neq 1),\\
&a_{1i,p_{i}}=-\frac{f_{1}}{1+f_{1}^{2}}\,(i\neq1),\quad
a_{1i,p_{j}}=0\,(i\neq 1,i\neq j),\\
&a_{ij,p_{k}}=0\,(i\neq 1,j\neq 1).\endaligned$$
Then, we can rewrite \eqref{eq-aij} as 
\begin{align*}
a_{ii}f_{1ii}+a_{11,p_{1}}f_{11}^{2}+2\sum_{i\geq 2}a_{1i,p_{i}}f_{1i}^{2}=\frac{nf_{1}}{f^{2}},
\end{align*}
or 
$$a_{ii}f_{1ii}=\frac{2f_{1}}{1+f_{1}^{2}}\sum_{i\geq 1}a_{ii}f_{1i}^{2}+  \frac{nf_{1}}{f^{2}}.$$
If $f_{1}\neq 0, $ we have, by a simple substitution,
$$a_{ij}g_{\varepsilon,ij}=-\frac{n(n+\varepsilon)}{f^2}-(n+\varepsilon)\frac{a_{11}f_1^2}{f^{2}}
+\frac{a_{ii}f^2_{ki}}{1+f_1^{2}}
+\frac{a_{ii}f_{1ii}f_{1}}{1+f_1^{2}}-\frac{2a_{ii}f_1^2f_{1i}^2}{(1+f_1^{2})^{2}},
$$
and, keeping only $k=1$ in the middle term, 
$$\aligned
a_{ij}g_{\varepsilon,ij}&\ge-\frac{n(n+\varepsilon)}{f^2}-(n+\varepsilon)\frac{a_{11}f_1^2}{f^{2}}
+\frac{a_{11}f^2_{11}}{1+f_1^{2}}\\
&\qquad+\frac{f_{1}}{1+f_1^{2}}\left(\frac{2f_{1}}{1+f_{1}^{2}}a_{11}f_{11}^{2}+  n\frac{f_{1}}{f^{2}}\right)
-\frac{2a_{11}f_1^2f_{11}^2}{(1+f_1^{2})^{2}}.\endaligned
$$
Hence, 
\begin{align*}
a_{ij}g_{\varepsilon, ij}
&\ge \frac{\varepsilon}{f^{2}}\left(n+\varepsilon-\frac{f_{1}^{2}}{1+f_{1}^{2}}\right)>0.
\end{align*}
On the other hand, since $g_{\varepsilon}$ attains its maximum at $x_{0},$ 
we have $a_{ij}g_{\varepsilon, ij}\leq 0$, which leads to a contradiction. 
Therefore, $\nabla f(x_{0})=0$, and by the definition of $F_{\varepsilon},$ we have
\begin{align*}
F_{\varepsilon}\leq |f|_{L^{\infty}(\Omega)}^{n+\varepsilon}.
\end{align*}
This implies \eqref{eq-estimate-epsilon} in the case that $\partial\Omega$ is smooth. 

We now consider the general case that $\partial\Omega$ is $C^{2}$ 
with $H_{\partial\Omega}\ge 0$. We can take a sequence of smooth domains 
$\{\Omega_{k}\}$ with $H_{\partial\Omega_{k}}\geq 0$ such that 
$\partial\Omega_k$ approaches $\partial\Omega$ in $C^2$. 
Let $f_k\in C(\bar\Omega_k)\cap C^\infty(\Omega_k)$ be the solution of
\begin{align*}
\Delta f_{k}-\frac{f_{k,i}f_{k,j}}{1+|\nabla f_{k}|^{2}}f_{k,ij}+\frac{n}{f_{k}}&=0 \quad\text{in }\Omega_{k},\\
f_{k}&=0 \quad \text{on } \partial\Omega_{k},\\
f_{k}&>0 \quad\text{in }  \Omega_{k}.
\end{align*}
By what we just proved, we have \begin{align*}
f_{k}^{n+\epsilon}\sqrt{1+|\nabla f_{k}|^{2}}\leq |f_{k}|_{L^{\infty}(\Omega_{k})}^{n+\epsilon}.
\end{align*}
By the interior estimate and Lemma \ref{lemma-f-order}, we have $f_{k}(x)\to f(x)$
and $\nabla f_{k}(x)\to \nabla f (x)$ for any $x \in \Omega$. Hence, by taking the
limit, we obtain
\begin{align*}
f^{n+\epsilon}\sqrt{1+|\nabla f|^{2}}\leq |f|_{L^{\infty}(\Omega)}^{n+\epsilon}.
\end{align*}
This is \eqref{eq-estimate-epsilon} in the general case. 
\end{proof}

Now, we are ready to prove Theorem \ref{thrm-main}.

\begin{proof}[Proof of Theorem \ref{thrm-main}]
By Lemma \ref{lemma-Df=0}, we have
\begin{align*}
f^{n}\sqrt{1+|\nabla f|^{2}}\leq |f|_{L^{\infty}(\Omega)}^{n}.
\end{align*}
Hence,
\begin{align}\label{eq-derivative-(n+1)}
|\nabla f^{n+1}|< (n+1)\operatorname{diam}(\Omega)^{n}.
\end{align}
By the mean value theorem, we obtain, for any $x\in\Omega$, 
$$f^{n+1}(x)=|f^{n+1}(x)-0| \leq  |\nabla f^{n+1}| d(x),$$
and hence
\begin{equation}\label{eq-growth-rate}
f(x)\leq \big[(n+1) \operatorname{diam}(\Omega)^{n}\big]^{\frac{1}{n+1}} d(x)^{\frac{1}{n+1}}.
\end{equation}
We point out that \eqref{eq-growth-rate} is sharper than Lemma \ref{lemma-f-order}.  

Next, we note, for any $x_1, x_2\in \Omega$,  
\begin{align}\label{eq-n+1}
|f(x_{1})-f(x_{2})|^{n+1}\leq |f(x_{1})^{n+1}-f(x_{2})^{n+1}|.
\end{align}
In fact, for $f(x_{1}),f(x_{2})>0,$ we may assume 
$f(x_{1})=\max\{f(x_{1}),f(x_{2})\},$ and then employ  $|1-y|^{n+1}\leq |1-y^{n+1}|,$ 
with $y=\frac{f(x_{2})}{f(x_{1})},$ to derive (\ref{eq-n+1}). 

Now, we claim, for any $x_1, x_2\in \Omega$, 
\begin{equation}\label{eq-Holder}
|f(x_1)-f(x_2)|\leq \big[(n+1) \operatorname{diam}(\Omega)^{n}\big]^{\frac{1}{n+1}}
|x_1-x_2|^{\frac{1}{n+1}}.\end{equation}
With $d_i=\operatorname{dist}(x_i, \partial\Omega)$ for $i=1,2$, we assume $d_{1}\ge d_2.$

If $|x_{1}-x_{2}|\geq d_{1},$ by \eqref{eq-growth-rate}, we have 
$$\aligned 
|f(x_{1})-f(x_{2})|&\leq  |f(x_1)|\le
\big[(n+1) \operatorname{diam}(\Omega)^{n}\big]^{\frac{1}{n+1}} d_{1}^{\frac{1}{n+1}}\\
&\leq \big[(n+1)\operatorname{diam}(\Omega)^{n}\big]^{\frac{1}{n+1}} |x_{1}-x_{2}|^{\frac{1}{n+1}}.
\endaligned$$
If $|x_{1}-x_{2}|< d_{1},$ by \eqref{eq-n+1} and \eqref{eq-derivative-(n+1)}, we have,  
\begin{align*}
|f(x_{1})-f(x_{2})|^{n+1}&\leq |f(x_{1})^{n+1}-f(x_{2})^{n+1}|\le |\nabla f(\widetilde x)^{n+1}||x_1-x_2|\\
&\leq  (n+1) \operatorname{diam}(\Omega)^{n}|x_{1}-x_{2}|,
\end{align*}
where $\widetilde x$ is some point in $B_{d_1}(x_1)\subset \Omega$. 
In summary, we have \eqref{eq-Holder}. 
\end{proof}

\section{Convex Domains}\label{sec-convex}

In this section, we discuss refined regularity for $f$ in general convex domains.
First, we prove that  \eqref{eq-main} admits a solution in convex domains. We point out 
that there is no higher regularity assumptions on the boundary of the domains. 

\begin{theorem}\label{thrm-ConvexDom}
Assume that $\Omega\subset \mathbb{R}^{n}$ is a bounded convex domain. 
Then, \eqref{eq-main} admits a unique solution
$f\in C(\bar{\Omega})\cap C^{\infty}(\Omega)$ and $f$ is concave.  Moreover,
$f\in C^{\frac{1}{n+1}}(\bar{\Omega})$ and 
$$[f]_{C^{\frac{1}{n+1}}(\bar{\Omega})} \leq [(n+1) \operatorname{diam}(\Omega)^{n}]^{\frac{1}{n+1}},$$
where $\operatorname{diam}(\Omega)$ is the diameter of $\Omega$.
\end{theorem}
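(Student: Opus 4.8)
\textbf{Proof proposal for Theorem \ref{thrm-ConvexDom}.}

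The plan is to reduce the statement to the case of smooth mean-convex domains, for which existence and uniqueness are already available via Lin \cite{Lin1989Invent}, and then obtain the quantitative H\"older bound as a consequence of Theorem \ref{thrm-main} together with an approximation argument. First I would approximate the bounded convex domain $\Omega$ from the inside by a nested sequence of smooth bounded strictly convex domains $\Omega_k \nearrow \Omega$, with $\partial\Omega_k$ smooth and $H_{\partial\Omega_k} > 0$; this is a standard mollification construction and crucially requires no regularity of $\partial\Omega$ beyond convexity. On each $\Omega_k$, Lin's theorem gives a unique solution $f_k \in C(\bar\Omega_k)\cap C^\infty(\Omega_k)$, and by the maximum principle (comparing with the explicit ball solutions $f_R$ as in Section \ref{sec-mean-convex}) we have the uniform bound $|f_k|_{L^\infty(\Omega_k)} \le \operatorname{diam}(\Omega_k) \le \operatorname{diam}(\Omega)$. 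The key point is that Lemma \ref{lemma-Df=0} applies to each $f_k$, yielding $f_k^{\,n}\sqrt{1+|\nabla f_k|^2}\le |f_k|_{L^\infty}^n \le \operatorname{diam}(\Omega)^n$; exactly as in the proof of Theorem \ref{thrm-main}, this forces $|\nabla f_k^{\,n+1}| \le (n+1)\operatorname{diam}(\Omega)^n$ and hence the uniform H\"older estimate
$$[f_k]_{C^{\frac{1}{n+1}}(\bar\Omega_k)} \le \big[(n+1)\operatorname{diam}(\Omega)^n\big]^{\frac{1}{n+1}},$$
with a constant independent of $k$.

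Next I would pass to the limit. By interior Schauder/gradient estimates for the (uniformly elliptic on compact subsets) equation, together with the uniform $L^\infty$ bound, a subsequence of $\{f_k\}$ converges in $C^\infty_{\mathrm{loc}}(\Omega)$ to some $f\in C^\infty(\Omega)$ solving the equation in $\Omega$. The uniform H\"older bound above is stable under this locally uniform convergence, so $f$ extends to $\bar\Omega$ with $f=0$ on $\partial\Omega$ and satisfies the claimed estimate; positivity $f>0$ in $\Omega$ follows from a lower barrier (an interior ball at each point together with the explicit solution $f_R$). For uniqueness in $\bar\Omega$ one uses the comparison principle for the equation in \eqref{eq-main}: given two solutions, the difference argument of Lin — sliding the graphs and using that the operator is proper in $f$ through the $n/f$ term — shows they coincide; alternatively one invokes the uniqueness already established by Lin for the exhausting smooth domains together with monotonicity of solutions under domain inclusion.

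It remains to prove concavity of $f$. The cleanest route is to establish concavity for each $f_k$ on the strictly convex smooth domain $\Omega_k$ and then pass to the limit, since a locally uniform limit of concave functions is concave. For the concavity of $f_k$ I would use the standard concavity maximum principle / constant rank technique for quasilinear elliptic equations: one considers the concavity function $C(x,y) = f_k\big(\tfrac{x+y}{2}\big) - \tfrac12 f_k(x) - \tfrac12 f_k(y)$ on $\Omega_k\times\Omega_k$, shows it cannot attain a negative interior minimum using the structure of the equation, and handles the boundary using $f_k=0$ on the convex boundary $\partial\Omega_k$ together with the blow-up of $|\nabla f_k|$ near $\partial\Omega_k$ (which pushes any boundary minimum inward). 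The main obstacle I anticipate is precisely this concavity step: the equation is non-uniformly elliptic and singular at the boundary through the $n/f$ term, so the boundary analysis in the concavity maximum principle is delicate, and one must verify that the zeroth-order term $n/f$ interacts favorably — here the fact that $1/f$ is convex along the segment when $f$ is concave and positive is what makes the sign work out. Everything else is routine approximation and a direct transcription of the estimates from Section \ref{sec-mean-convex}.
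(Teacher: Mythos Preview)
Your proposal is correct and follows essentially the same route as the paper: approximate $\Omega$ by smooth convex domains, apply the estimates of Section~\ref{sec-mean-convex} uniformly on each $\Omega_k$, and pass to the limit via interior estimates and the uniform H\"older bound. For the concavity step you flag as the main obstacle, the paper bypasses the direct maximum-principle argument by invoking Kennington's power-concavity results (Theorems~3.1 and~3.2 in \cite{Kennington1985}) on each smooth convex $\Omega_k$, so the boundary analysis you sketch is already packaged in that reference.
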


\begin{proof}
We first prove the existence and note that the uniqueness is a simple 
consequence of the maximum principle.

We take a sequence of bounded smooth  convex domains
$\{\Omega_{k}\}$  such that
$\partial\Omega_k$ approaches $\partial\Omega$ in the Hausdorff metric.
Let $f_k\in C(\bar\Omega_k)\cap C^\infty(\Omega_k)$ be the solution of
\begin{align*}
\Delta f_{k}-\frac{f_{k,i}f_{k,j}}{1+|\nabla f_{k}|^{2}}f_{k,ij}+\frac{n}{f_{k}}&=0 \quad\text{in }\Omega_{k},\\
f_{k}&=0 \quad \text{on } \partial\Omega_{k},\\
f_{k}&>0 \quad\text{in }  \Omega_{k}.
\end{align*}
By \eqref{eq-growth-rate} and  the interior estimate, we have, 
for any $m\ge 1$ and any $\Omega'\subset\subset\Omega$,
$$f_ k \rightarrow f\quad\text{in }C^m(\Omega'),$$
for some function $f\in C(\bar \Omega)\cap C^\infty(\Omega)$ with 
$f =0$ on $\partial\Omega$. Therefore, $f$ is the unique solution of \eqref{eq-main}.
Next, we apply 
Theorems 3.1 and 3.2 \cite{Kennington1985} in $\Omega_k$ and conclude that 
$f_k$ are concave. Hence, 
$f$ is  concave  in $\Omega$.

For the global regularity, we take  any $x_1, x_2\in \Omega$. Then, 
$x_1, x_2\in \Omega_k$ for $k$ large, and hence
\begin{equation*}
|f_{k}(x_1)-f_k(x_2)|\leq \big[(n+1) \operatorname{diam}(\Omega_k)^{n}\big]^{\frac{1}{n+1}}
|x_1-x_2|^{\frac{1}{n+1}}.\end{equation*}
By letting $k\to \infty$, we get 
\begin{equation*}
|f(x_1)-f(x_2)|\leq \big[(n+1) \operatorname{diam}(\Omega)^{n}\big]^{\frac{1}{n+1}}
|x_1-x_2|^{\frac{1}{n+1}}.\end{equation*}
This implies the desired result on the H\'older semi-norm of $f$. 
\end{proof}

The H\"{o}lder exponent $\frac{1}{n+1}$ is optimal.
By Remark \ref{remark}, we cannot improve the regularity for $f$
in general convex domains.

We next consider the local regularity for $f$. 
We write the equation \eqref{eq-main} in its divergence form
$$\nabla\frac{\nabla f}{\sqrt{1+ |\nabla f|^{2}}} +\frac{n}{f \sqrt{1+ |\nabla f|^{2}}}=0.$$
Then,  we have, for any $\varphi \in C^{\infty}_{0}(\Omega)$,
\begin{equation}\label{eq-interg-by-part}
\int_{\Omega} \frac{\nabla f \nabla\varphi}{\sqrt{1+ |\nabla f|^{2}}} 
-\int_{\Omega} \frac{n \varphi}{f \sqrt{1+ |\nabla f|^{2}}}=0.
\end{equation}

We now prove a local regularity for solutions of \eqref{eq-main}. We point out that there is no 
regularity assumption on the domain. Hence, it may be applied to domains with singularity. 

\begin{theorem}\label{thrm-regl-growth-rate}
Let $\Omega\subset \mathbb{R}^{n}$ be a bounded domain and
$f\in C(\bar\Omega)\cap C^\infty (\Omega)$ be a solution of \eqref{eq-main}. 
For some $x_0\in\partial\Omega$ and $\alpha\in (0,1)$, assume
$$f\leq Md^{\alpha},\quad \Delta f \le0\quad\text{in }\Omega\cap B_r(x_0),$$
for some constants $M\ge 1$ and $r>0$. 
Then, $f\in C^{\alpha}(\bar\Omega\cap B_{r/2}(x_0))$, and 
\begin{equation}\label{eq-HolderEstimate}[f]_{C^{\alpha}(\bar\Omega\cap B_{r/2}(x_0))}\le C M,\end{equation}
where $C$ is a positive constant depending only on $n$ and $\alpha$. If, in addition, 
$f$ is concave in $\Omega\cap B_r(x_0)$ and $\alpha\in (0,1/2]$, then 
\begin{align}\label{eq-GradientEstimate}
|\nabla f^{\frac{1}{\alpha}}|_{L^\infty(\Omega\cap B_{r/2}(x_0))}\leq CM^{\frac1\alpha},
\end{align}
where $C$ depends on $n$ and $\alpha$. 
\end{theorem}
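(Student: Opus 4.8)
\textbf{Proof proposal for Theorem \ref{thrm-regl-growth-rate}.}
The plan is to establish the Hölder estimate \eqref{eq-HolderEstimate} by a local barrier / De Giorgi--Nash--Moser style argument, and then to upgrade it under concavity to the gradient estimate \eqref{eq-GradientEstimate} by a pointwise maximum-principle computation of the kind already used in Lemma \ref{lemma-Df=0}. For the first part, the hypothesis $f \le M d^{\alpha}$ controls $f$ from above near $x_0$, so the main task is to bound $|f(x_1)-f(x_2)|$ for interior points. I would distinguish the two familiar regimes: if $|x_1-x_2| \ge c\, d(x_1)$ (with $d_1 \ge d_2$), then the bound follows directly from $f(x_i) \le M d(x_i)^{\alpha}$ together with $d(x_i) \lesssim |x_1 - x_2|$; if $|x_1 - x_2| < c\, d(x_1)$, then $x_1, x_2$ lie in a ball $B_{d_1}(x_1) \subset \Omega \cap B_r(x_0)$ and I would use an interior gradient estimate for the minimal-surface-type equation \eqref{eq-main} on that ball, rescaled to unit size, to get $|\nabla f| \lesssim M d_1^{\alpha - 1}$ there, hence $|f(x_1) - f(x_2)| \lesssim M d_1^{\alpha-1}|x_1-x_2| \le M |x_1 - x_2|^{\alpha}$. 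The role of $\Delta f \le 0$ is to guarantee (via the divergence form \eqref{eq-interg-by-part} and the sign of the zero-order term) that the relevant interior estimates and the comparison arguments go through without needing boundary regularity of $\Omega$; one uses that $f$ is a supersolution of the linearized operator on $\Omega \cap B_r(x_0)$.

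For the gradient estimate \eqref{eq-GradientEstimate} under the additional concavity hypothesis and $\alpha \le 1/2$, I would mimic the structure of the proof of Lemma \ref{lemma-Df=0}: set $g = f^{1/\alpha}$ and compute its gradient and Hessian from the equation satisfied by $f$, using $a_{ij}(\nabla f) f_{ij} = -n/f$. The point of the exponent $1/\alpha$ is that it is the critical power making $\nabla g$ bounded: schematically $\nabla g = \tfrac{1}{\alpha} f^{1/\alpha - 1}\nabla f$, and since $f \le M d^{\alpha}$ gives $f^{1/\alpha - 1} \lesssim M^{1/\alpha - 1} d^{1-\alpha}$ while the interior gradient bound gives $|\nabla f| \lesssim M d^{\alpha - 1}$, the product is $O(M^{1/\alpha})$ away from the boundary; concavity of $f$ (hence a one-sided control of $f_{ij}$, together with $\Delta f \le 0$) is what lets this pointwise product estimate be promoted to a genuine $L^\infty$ bound on $\nabla g$ up to the boundary rather than just a rate along normals. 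Concretely I expect to show $h := |\nabla g|$ cannot have an interior maximum exceeding $CM^{1/\alpha}$ by testing $a_{ij}\partial_{ij}(\log h)$ or $a_{ij}\partial_{ij}(g + \text{(quadratic)})$ at a would-be maximum and deriving a contradiction from concavity and $\alpha \le 1/2$, exactly as the $\varepsilon$-term was exploited in Lemma \ref{lemma-Df=0}; the condition $\alpha \le 1/2$ should enter as the sign condition that makes the analogue of the inequality $a_{ij}g_{\varepsilon,ij} > 0$ work.

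The main obstacle I anticipate is the gradient estimate, not the Hölder estimate. Two technical difficulties stand out. First, because there is no regularity assumption on $\partial\Omega$, one cannot use boundary barriers or reflect across $\partial\Omega$; every estimate must be purely interior and then passed to the limit, so the maximum-principle argument has to be run on the open set $\Omega \cap B_{r/2}(x_0)$ with the contribution of $\partial B_{r/2}(x_0)$ controlled by the already-established Hölder bound \eqref{eq-HolderEstimate} and the contribution of $\partial\Omega$ controlled by showing $|\nabla g|$ extends continuously to $0$ or stays bounded there (this is where $f \le M d^\alpha$ combined with concavity is essential). Second, the algebra of computing $a_{ij}(\nabla g)\partial_{ij}g$ and tracking the sign of every term — in particular isolating the good term proportional to $(1-2\alpha)$ or to $(1/\alpha - n)$ and dominating the cross terms — is delicate; I would handle it by rotating coordinates at the maximum point so that $\nabla f$ points along $e_1$, exactly as in Step 2 of the proof of Lemma \ref{lemma-Df=0}, which diagonalizes $a_{ij}$ and reduces the computation to the one already carried out there with $n+\varepsilon$ replaced by $1/\alpha$. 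If a clean pointwise argument proves recalcitrant, the fallback is to cover $\Omega \cap B_{r/2}(x_0)$ by interior balls $B_{d(x)/2}(x)$, apply the scaled interior gradient estimate on each together with $f \le M d^\alpha$ and a Harnack-type lower bound for $f$ coming from concavity, and assemble the bound on $|\nabla f^{1/\alpha}|$ ball by ball.
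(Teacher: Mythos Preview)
The proposal differs substantially from the paper's proof, and in both parts leans on a step that is not justified in this generality.

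For the H\"older estimate, your plan hinges on the pointwise interior bound $|\nabla f|\lesssim M d^{\alpha-1}$, which you invoke as a black-box ``interior gradient estimate for the minimal-surface-type equation''. But this bound is precisely the content of the theorem at a single scale, and it does not follow from any standard estimate: after rescaling $B_{d_x}(x)$ to unit size, the rescaled $\tilde f$ satisfies the same equation (by scale invariance), has $\tilde f\ge 1$, but may have $\sup\tilde f\sim M d_x^{\alpha-1}\to\infty$; you would need a gradient estimate \emph{linear} in the oscillation for this singular equation, and you have not said how to get one. The paper does not attempt a pointwise bound. Instead it tests the divergence-form identity \eqref{eq-interg-by-part} successively with $\varphi=f\psi$, $\varphi=\psi^k(1+|\nabla f|^2)^{(k-1)/2}$ and $\varphi=\psi^k f(1+|\nabla f|^2)^{(k-1)/2}$ (where $\psi$ is a cutoff on $B_{d_x}(x)$), uses $\Delta f\le0$ to fix the sign of the dangerous term at each step, and iterates to obtain
\[
\|\nabla(\psi f)\|_{L^p(B_{d_x}(x))}\le CMd_x^{\alpha-1+n/p}\qquad\text{for every }p\ge1.
\]
Taking $p=n/(1-\alpha)$ and applying Sobolev embedding gives $[f]_{C^\alpha(B_{d_x/2}(x))}\le CM$. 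So $\Delta f\le0$ is not a background hypothesis ``making comparisons go through''; it is the sign that lets the $L^p$ iteration close.

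For the gradient estimate, the plan to mimic Lemma~\ref{lemma-Df=0} runs into the obstacle you yourself flag: in that lemma, Step~1 controls the boundary behavior of the auxiliary quantity via the expansion of Lemma~\ref{lemma-Expension-u}, which requires $\partial\Omega$ smooth. Here $\partial\Omega$ has no regularity, so you cannot rule out a boundary maximum of $|\nabla(f^{1/\alpha})|$ by expansion, and concavity alone does not obviously force $|\nabla(f^{1/\alpha})|\to0$ at $\partial\Omega$. The paper avoids any maximum-principle argument here: from concavity it first shows $|\Delta f|\le \frac{n}{f}(1+|\nabla f|^2)$, hence $|\Delta(f^{1/\alpha})|\le \frac{n}{\alpha^2}f^{1/\alpha-2}(1+|\nabla f|^2)$, and then applies the interior $W^{2,p}$ estimate on each $B_{d_x/2}(x)$, feeding in the $L^p$ bound on $\nabla f$ already obtained in Step~1 to get $|\nabla(f^{1/\alpha})(x)|\le CM^{1/\alpha}$. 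The restriction $\alpha\le 1/2$ enters only as the exponent balance that makes the right-hand side $O(d_x)$ in this estimate, not as a sign condition in a maximum-principle computation. Your fallback (``cover by interior balls'') is close in spirit to the paper's actual argument, but without the $L^p$ integral bound on $\nabla f$ from Step~1 you have nothing to insert into the $W^{2,p}$ estimate, so that route is circular until you carry out the paper's Step~1.
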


\begin{proof}
{\it Step 1.} We will prove, for any $x\in\Omega\cap B_{r/2}(x_0)$, 
\begin{equation}\label{f-Lp-int}
[f]_{C^{\alpha}(B_{d_{x}/2}(x))}\le C M,\end{equation}
where $C$ is a positive constant depending only on $n$ and $\alpha$.
Then, we have \eqref{eq-HolderEstimate} by combining with 
$f\leq Md^{\alpha}$ in $\Omega\cap B_r(x_0)$. Here and hereafter, 
we write $d_x=d(x)$. 

We now fix a point $x\in\Omega\cap B_{r/2}(x_0)$ and a cutoff function 
$\psi\in C_0^\infty(B_{d_x}(x))$, with $0\le\psi\le 1$ and $|\nabla \psi|\le Cd_{x}^{-1}$, 
for some positive constant $C$ depending only on $n$. In addition, we assume 
$\psi=1$ in $B_{d_x/2}(x)$. 

We note $d\le 2d_x$ in $B_{d_x}(x)$. To verify this, we take 
$\widetilde x\in \partial B_{d_x}(x)\cap \partial\Omega$. Then, for any $y\in B_{d_x}(x)$, 
$d(y)\le |y-\widetilde x|\le |y-x|+|x-\widetilde x|\le 2d_x$. This implies 
$$f\le 2Md_x^\alpha\quad\text{in }B_{d_x}(x).$$
Next, we note $f \geq d$. This follows from a simple comparison of $f$ 
and the corresponding solution in $B_{d(y)}(y)$, for any $y\in\Omega$. 
Then, for any $y\in B_{d_x}(x)$, we have, for some $y_0\in\partial B_{d_x}(x)$,  
$$f(y)\ge d(y)\ge |y-y_0|,$$
and, for some $\widetilde y$ between $y$ and $y_0$,  
$$\psi(y)=\psi(y)-\psi(y_0)\le |\nabla \psi(\widetilde y)||y-y_0|\le Cd_x^{-1}|y-y_0|.$$
Hence, 
\begin{equation}\label{eq-ratio}\frac\psi f\le Cd_x^{-1}\quad\text{in }B_d(x).\end{equation}

Taking $ \varphi=f \psi$ in \eqref{eq-interg-by-part},  we have
$$\int\psi\sqrt{1+|\nabla f|^2}=\int\frac{(n+1)\psi}{\sqrt{1+|\nabla f|^2}}
-\frac{f\nabla f\nabla\psi}{\sqrt{1+|\nabla f|^2}}\le (n+1)\int\psi+\int f|\nabla\psi|.$$ 
Then, 
\begin{equation}\label{eq-iteration1}\int\psi\sqrt{1+|\nabla f|^2}\le C M d_x^{\alpha -1+ n}.
\end{equation}

For an integer $k\ge 2$, take $\varphi= \psi^{k}(1+ |\nabla f|^{2})^{\frac{k-1}{2}}$ 
in \eqref{eq-interg-by-part}. Then, 
\begin{align*}
&\int k\psi^{k-1}(1+ |\nabla f|^{2})^{\frac{k-2}{2}} \nabla f\nabla\psi  
-\int\frac{n}{f}\psi^{k}(1+ |\nabla f|^{2})^{\frac{k-2}{2}}\\
&\qquad+(k-1)\int\psi^{k}(1+ |\nabla f|^{2})^{\frac{k-2}{2}}\frac{f_{i}f_{j}}{1+|\nabla f|^{2}}f_{ij}=0.
\end{align*}
Note 
$$\frac{f_{i}f_{j}}{1+|\nabla f|^{2}}f_{ij}=\Delta f+\frac{n}{f}.$$
A simple substitution yields 
\begin{align*}
&\int k\psi^{k-1}(1+ |\nabla f|^{2})^{\frac{k-2}{2}} \nabla f\nabla\psi  
+n(k-2)\int\frac{1}{f}\psi^{k}(1+ |\nabla f|^{2})^{\frac{k-2}{2}}\\
&\qquad+(k-1)\int\psi^{k}(1+ |\nabla f|^{2})^{\frac{k-2}{2}}\Delta f=0.
\end{align*}
By $\Delta f\le 0$ and \eqref{eq-ratio}, we obtain 
$$\aligned &\int\psi^{k}(1+ |\nabla f|^{2})^{\frac{k-2}{2}}|\Delta f|\\
&\qquad\le 
Cd_x^{-1}\bigg\{\psi^{k-1}(1+ |\nabla f|^{2})^{\frac{k-1}{2}}   
+(k-2)\int\psi^{k-1}(1+ |\nabla f|^{2})^{\frac{k-2}{2}}\bigg\}.\endaligned$$
We include the factor $k-2$ to emphasize that the corresponding term disappears if $k=2$. 
Then, 
\begin{align}\label{eq-iteration2}\begin{split} 
&\int\psi^{k}f(1+ |\nabla f|^{2})^{\frac{k-2}{2}}|\Delta f|\\
&\qquad\le 
CMd_x^{\alpha-1}\bigg\{\int\psi^{k-1}(1+ |\nabla f|^{2})^{\frac{k-1}{2}}   
+(k-2)\int\psi^{k-2}(1+ |\nabla f|^{2})^{\frac{k-2}{2}}\bigg\}.\end{split}\end{align}
Next, take $\varphi= \psi^{k}f(1+ |\nabla f|^{2})^{\frac{k-1}{2}}$ 
in \eqref{eq-interg-by-part}. A similar calculation yields 
\begin{align*}
&\int k\psi^{k-1}f(1+ |\nabla f|^{2})^{\frac{k-2}{2}} \nabla f\nabla\psi  
+n(k-2)\int\psi^{k}(1+ |\nabla f|^{2})^{\frac{k-2}{2}}\\
&\qquad+\int\psi^{k}(1+ |\nabla f|^{2})^{\frac{k-2}{2}}|\nabla f|^2
+(k-1)\int\psi^{k}f(1+ |\nabla f|^{2})^{\frac{k-2}{2}}\Delta f=0.
\end{align*}
Combining with \eqref{eq-iteration2}, we obtain 
\begin{align*} 
&\int\psi^{k}(1+ |\nabla f|^{2})^{\frac{k-2}{2}}|\nabla f|^2\\
&\qquad\le 
CMd_x^{\alpha-1}\bigg\{\int\psi^{k-1}(1+ |\nabla f|^{2})^{\frac{k-1}{2}}   
+(k-2)\int\psi^{k-2}(1+ |\nabla f|^{2})^{\frac{k-2}{2}}\bigg\},\end{align*}
and hence
\begin{align*} 
&\int\psi^{k}(1+ |\nabla f|^{2})^{\frac{k}{2}}\\
&\qquad\le 
CMd_x^{\alpha-1}\bigg\{\int\psi^{k-1}(1+ |\nabla f|^{2})^{\frac{k-1}{2}}   
+(k-2)\int\psi^{k-2}(1+ |\nabla f|^{2})^{\frac{k-2}{2}}\bigg\}.\end{align*}
With the help of  \eqref{eq-iteration1}, a simple iteration yields 
$$\int\psi^{k}(1+ |\nabla f|^{2})^{\frac{k}{2}} \le C M^k d_x^{k\alpha-k + n},$$
and hence
$$\int\psi^{k}|\nabla f|^{k}\le C M^k d_x^{k\alpha -k+ n}.$$
Therefore, for any integer $k\ge 1$, 
$$\int_{B_{d_x}(x)}|\nabla (\psi f)|^k \le C M^k d_x^{k\alpha -k+ n},$$
where $C$ is a positive constant depending only on $n$, $k$ and $\alpha$. 

Next, take any $p\ge 1$. If $p$ is not an integer, by fixing some integer $k>p$, we have
\begin{align*}
\int_{B_{d_x}(x)}|\nabla (\psi f)|^p  & \le 
\left( \int_{B_{d_x}(x)}1\right)^{1-\frac{p}{k}}
\left(\int_{B_{d_x}(x)}(|\nabla (\psi f)|^p)^{\frac{k}{p}}\right)^{\frac{p}{k}}\\
& \leq C M^p d^{n(1-\frac{p}{k})+ [{k\alpha -k+ n} ]\frac{p}{k}  } =C M^p d^{p\alpha  - p +n}.
\end{align*}
Therefore, for any $p\ge 1$, 
\begin{equation}\label{eq-IntegralEstimate}
\|\nabla (\psi f)\|_{L^p(B_{d_x}(x))}\le CMd_x^{\alpha-1+\frac{n}{p}},\end{equation}
where $C$ is a positive constant depending only on $n$, $p$ and $\alpha$. 

If $\alpha\in (0,1)$, we take $p=\frac{n }{1-\alpha}$ so that $\alpha= 1-\frac{n}{p}$. Then, 
$$\|\nabla (\psi f)\|_{L^p(B_{d_x}(x))}\le CM.$$
Hence,  \eqref{f-Lp-int} follows from the  Sobolev embedding.

{\it Step 2.} We now prove \eqref{eq-GradientEstimate}. We first claim 
\begin{align}\label{eq-BoundLaplance}
|\Delta f|\leq \frac{n}{f}(1+|\nabla f|^{2})\quad\text{in }\Omega\cap B_r(x_0).
\end{align}
We fix an $x\in \Omega\cap B_r(x_0)$. 
Since $\Delta f$ and $|\nabla f|$ are invariant under orthogonal transformations, we assume 
$|\nabla f|=f_{1}$ at $x$ by a rotation. Then the equation in  \eqref{eq-main}
reduces to  
\begin{align*}
&\Delta f-\frac{f_{1}^{2}}{1+f_{1}^{2}}f_{11}+\frac{n}{f}=0.
\end{align*} Therefore, at $x$,
\begin{align*}
&\frac{1}{1+f_{1}^{2}}f_{11}+f_{22}+\cdot\cdot\cdot+f_{nn}+\frac{n}{f}=0.
\end{align*}
Since $f_{ii}\leq 0,$ for $i=1,\cdot\cdot\cdot,n,$ we have
$$\frac{1}{1+f_{1}^{2}}\Delta f+\frac{n}{f}\geq0.$$
This implies \eqref{eq-BoundLaplance}
by $\Delta f\leq0$. 

Next, 
\begin{align*}
\Delta(f^{\frac{1}{\alpha}})=\frac{1}{\alpha}f^{\frac{1}{\alpha}-1}\Delta f
+\frac{1}{\alpha}\left(\frac{1}{\alpha}-1\right)f^{\frac{1}{\alpha}-2}|\nabla f|^{2}.
\end{align*} 
By $f\leq Md^{\alpha}$,
we have
\begin{align*}
|\Delta(f^{\frac{1}{\alpha}})|\leq \frac{n}{\alpha^{2}}f^{\frac{1}{\alpha}-2}(1+|\nabla f|^{2})
\quad\text{in }\Omega\cap B_r(x_0).
\end{align*}
Fix an $x\in \Omega\cap B_{r/2}(x_0)$ and a $p>n$. By applying the 
$W^{2,p}$-estimate in $B_{d_x/2}(x)$ and \eqref{eq-IntegralEstimate},  we get 
\begin{align*}
d_x|\nabla(f^{\frac{1}{\alpha}})(x)|&\leq 
C\left\{\frac{n}{\alpha^{2}}d_x^{2-\frac{n}{p}}\|f^{\frac{1}{\alpha}-2}(1+|\nabla f|^{2})\|_{L^{p}(B_{d_x/2}(x))}
+\|f^{\frac{1}{\alpha}}\|_{L^{\infty}(B_{d_x/2}(x))}\right\}\\
&\leq CM^{\frac1\alpha}\left\{d_x^{2-\frac{n}{p}}d_x^{\alpha(\frac{1}{\alpha}-2)}d_x^{2\alpha+\frac np-2}+d_x\right\}
\le CM^{\frac1\alpha}d_x,
\end{align*}
where we used the fact $\alpha\leq 1/2$. Hence, for any $x\in \Omega\cap B_{r/2}(x_0)$, 
$$|\nabla(f^{\frac{1}{\alpha}})(x)|\le CM^{\frac1\alpha}.$$
This is \eqref{eq-GradientEstimate}.
\end{proof}

We now prove a result concerning the local growth. 

\begin{lemma}\label{lemma-growth-rate-1/2}
Let $\Omega\subset \mathbb{R}^{n}$ be a bounded domain
and $f\in C(\bar\Omega)\cap C^\infty (\Omega)$ be a solution of \eqref{eq-main}. 
Suppose $\partial\Omega$ is $C^2$ and $H_{\partial\Omega}>0$ near $x_0\in\partial\Omega$.
Then,
$$f\leq Cd^{\frac{1}{2}}\quad\text{in }\Omega\cap B_r(x_0) ,$$
where $r$ and $C$ are positive constants depending only on $n$, 
the geometry of $\partial \Omega$ near $x_0$  and the diameter of $\Omega$.
\end{lemma}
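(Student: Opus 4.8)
The plan is to construct a local supersolution of the form $w=\psi(d)$ with $\psi(d)=A\big[d^{1/2}-d^{q}\big]$ for a suitable exponent $q\in(1/2,3/2)$ and a large constant $A$, and then invoke the maximum principle exactly as in the proof of Lemma \ref{lemma-f-order}. Since $H_{\partial\Omega}>0$ near $x_0$, we now have a strictly positive term $\psi'\Delta d$ available to absorb lower-order terms, which is precisely what allows the improvement from the exponent $\frac{1}{n+1}$ to $\frac12$. The key algebraic point is that the choice $p=\frac12$ makes the coefficient of the leading power $d^{2p-2}=d^{-1}$ in $\psi''\psi+n(1+\psi'^2)$ equal to $p(p-1+pn)$, which no longer vanishes (unlike the case $p=\frac1{n+1}$); instead, this term must be controlled using the strict mean convexity, i.e.\ the term $\psi'\Delta d$ contributes a term of order $-H_{\partial\Omega}\,p\,d^{p-1}\cdot(\text{something})$ that dominates near $d=0$.

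First I would recall from \cite{GT1983} the expansion $\Delta d(x)=-H_{\partial\Omega}(\pi(x))+O(d)$ near $\partial\Omega$, where $\pi(x)$ is the nearest-point projection, so that on a neighborhood $\Omega\cap B_r(x_0)$ we have $\Delta d\le -c_0<0$ for some $c_0>0$ depending on $\min H_{\partial\Omega}$ near $x_0$. Then, repeating the computation in Lemma \ref{lemma-f-order} but keeping the $\psi'\Delta d$ term, I would write
$$\Delta w-\frac{w_iw_j}{1+|\nabla w|^2}w_{ij}+\frac nw\le \frac{1}{1+\psi'^2}\psi''+\frac n\psi+\psi'\Delta d\le\frac1\psi\Big(m(\psi)+\psi\psi'\Delta d\Big),$$
and with $p=\frac12$ compute $m(\psi)+\psi\psi'\Delta d$ as a rational function of $d$ with a prefactor $d^{q-1/2}$ as before. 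The dangerous term is now the one arising from $p(p-1+pn)d^{2p-2}=\frac{n-1}{4}d^{-1}$, which appears in $m(\psi)$; in the rearranged expression it is a constant term (after multiplying through), and it is negative only if $n=1$. For $n\ge 2$ I would show the $\psi\psi'\Delta d$ term produces a contribution $\sim -\tfrac{c_0}{2}A^2\cdot\tfrac12 d^{p-1}\cdot d^{p}= -\tfrac{c_0}{4}A^2 d^{2p-1}$ divided by the same denominator; but $2p-1=0$, so this is a negative constant of size $c_0A^2/4$ in the numerator, which beats the positive constant $\frac{(n-1)}{4}A^2$ coming from the bad term precisely when $c_0>n-1$ — and that is not automatic. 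The correct fix, which I would carry out, is to rescale the distance: work with $\widetilde d=d/\mu$ for a small $\mu>0$, equivalently choose $\psi(d)=A\big[(d/\mu)^{1/2}-(d/\mu)^q\big]$; since $\Delta d$ stays of order $c_0$ while the bad term scales like $1/d$, shrinking the neighborhood makes the $\Delta d$ contribution dominate. Concretely, on $\{d<\delta\}$ with $\delta$ small depending on $n$ and $c_0$, the leading negative term $\psi'\Delta d\sim -\tfrac12 c_0 A d^{-1/2}$ times $\psi\sim A d^{1/2}$ gives $-\tfrac12 c_0 A^2$, while $m(\psi)$'s worst term is $\tfrac{n-1}{4}A^2$ plus terms that vanish as $\delta\to0$; choosing additionally $q$ close to $1/2$ one pushes the remaining powers of $d$ to be positive, and then one simply needs $\tfrac12 c_0>\tfrac{n-1}{4}\cdot(\text{denominator factor})$ — wait, this still looks borderline, so the honest route is: keep $p=1/2$ but do NOT rely on cancellation; instead use that $\tfrac1\psi m(\psi)$ is $O(d^{-3/2})$-free and the genuinely leading term as $d\to0$ in $\Delta w - \cdots + n/w$ is $\psi'\Delta d\sim -\tfrac{c_0A}{2}d^{-1/2}\to-\infty$, which dominates every other term (all of which are $O(d^{-1/2})$ with bounded coefficient or smaller) once $A$ is large. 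That is the clean argument.

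So the key steps, in order, are: (1) fix $r$ small enough that $H_{\partial\Omega}\ge 2c_0>0$ on $\partial\Omega\cap B_r(x_0)$ and $d$ is smooth with $\Delta d\le -c_0$ on $\Omega\cap B_r(x_0)$; (2) take the barrier $\psi(d)=A\big(d^{1/2}-d^{q}\big)$ with, say, $q=1$, defined on $d\in(0,\delta)$ with $\delta=\min\{r/2, c_1(n)\}$; (3) compute $\Delta w-\frac{w_iw_j}{1+|\nabla w|^2}w_{ij}+\frac nw$ and verify that for $A$ large (depending on $n$, $c_0$, $\delta$) and $\delta$ small it is $\le 0$ on $\{0<d<\delta\}$, the decisive point being that the term $\psi'\Delta d\le -\tfrac{c_0}{2}A d^{-1/2}$ swamps $m(\psi)/\psi=O(d^{-1/2})$; (4) fix $A$ further large so that $f\le A(\delta^{1/2}-\delta)=\psi(\delta)$ on $\{d=\delta\}$, using $|f|_{L^\infty}\le\operatorname{diam}(\Omega)$; (5) apply the comparison principle for the operator in \eqref{eq-main} on the region $\{x\in\Omega\cap B_r(x_0): d(x)<\delta\}$ — noting $f=0\le\psi$ on the portion of the boundary lying in $\partial\Omega$, $f\le\psi$ on $\{d=\delta\}$, and both are handled on the spherical part $\partial B_r(x_0)\cap\Omega$ by also requiring $\delta<r$ so that this part is contained in $\{d\ge\delta\}$ — to conclude $f\le\psi\le Ad^{1/2}$ on $\Omega\cap B_{r}(x_0)\cap\{d<\delta\}$, and then trivially extend to all of $\Omega\cap B_{r/2}(x_0)$ by adjusting $C$.

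The main obstacle is step (3): making the sign computation robust. With $p=\frac1{n+1}$ the leading $d^{-1}$ term cancelled identically, which is why mean convexity alone sufficed; with $p=\frac12$ it does not cancel, so one genuinely needs the \emph{strict} positivity of $H_{\partial\Omega}$, and one has to be careful that the geometric term $\psi'\Delta d$, which is $O(d^{-1/2})$, actually dominates the $d^{-1}$-order garbage in $m(\psi)/\psi=\tfrac{\psi''}{1+\psi'^2}+\tfrac n\psi$ — but in fact $\tfrac{\psi''}{1+\psi'^2}\sim \tfrac{\psi''}{\psi'^2}\sim \tfrac{-\frac14 A d^{-3/2}}{\frac14A^2 d^{-1}}=-A^{-1}d^{-1/2}\to 0$ as $A\to\infty$, and $\tfrac n\psi\sim \tfrac{n}{A}d^{-1/2}\to 0$, while $\psi'\Delta d\le-\tfrac{c_0}{2}Ad^{-1/2}$. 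Thus dividing by $\psi\sim Ad^{1/2}$, the whole expression is bounded above by $\big(-\tfrac{c_0}{2}A + \tfrac{C(n)}{A}\big)d^{-1}\le 0$ once $A^2\ge 2C(n)/c_0$. This makes the barrier work cleanly; the remaining routine care is only in the $q$-term and the $O(d)$ error in $\Delta d$, both of which are lower order on a small enough neighborhood. I expect no other serious difficulty; the matching on $\partial B_r(x_0)$ is handled, as is standard, by shrinking $\delta$ relative to $r$.
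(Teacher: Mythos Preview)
Your supersolution calculation is essentially right: with $\psi(d)=A(d^{1/2}-d^{q})$ and $\Delta d\le -c_0$, the three contributions $\frac{\psi''}{1+\psi'^2}$, $\frac{n}{\psi}$, and $\psi'\Delta d$ are all of order $d^{-1/2}$, and for $A$ large the last one dominates, so $w=\psi(d)$ is a supersolution on $\{0<d<\delta\}$ for suitable $\delta$. The gap is not there.

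The gap is in step~(5), the comparison. Your comparison region is $U=\Omega\cap B_r(x_0)\cap\{d<\delta\}$, and you assert that taking $\delta<r$ forces the spherical portion $\partial B_r(x_0)\cap\Omega$ to lie in $\{d\ge\delta\}$, so that the lateral boundary of $U$ disappears. This is false: since $x_0\in\partial\Omega$, the sphere $\partial B_r(x_0)$ meets $\partial\Omega$, and near that intersection there are points of $\partial B_r(x_0)\cap\Omega$ with $d$ as small as you like. Hence $\partial U$ has a genuine lateral piece $\partial B_r(x_0)\cap\Omega\cap\{0<d<\delta\}$ on which you only know $f\le|f|_{L^\infty}$, while your barrier $\psi(d)\le A d^{1/2}$ goes to $0$ there. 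The inequality $f\le w$ on $\partial U$ fails, and the maximum principle cannot be applied.

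This is exactly why the paper does \emph{not} use a barrier depending on $d$ alone. The paper's barrier is
\[
w=Ad^{1/2}+B|x'|^{2},
\]
in principal coordinates at $x_0$, with $B=r^{-1}|f|_{L^\infty}$, applied on the cylinder-like set $G_r=\{x'\in B'_{\sqrt r},\ 0<d<r\}$. The tangential term $B|x'|^2$ is precisely what handles the lateral boundary: on $|x'|=\sqrt r$ one has $w\ge Br=|f|_{L^\infty}\ge f$ regardless of how small $d$ is. The role of strict mean convexity in the paper's computation is then the same as in yours---the term $A\alpha d^{\alpha-1}\Delta d$ with $\Delta d\le -c_0$ absorbs the positive error terms once $A/B=\tau$ is large---but the presence of the $B|x'|^2$ term introduces cross terms that require a somewhat longer calculation than the purely radial one. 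To fix your argument you must add such a tangential term (or some equivalent device) to close the comparison on the side.
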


\begin{proof} Without loss of generality, we assume $x_0$ is the origin and the
$x_n$-direction is the interior normal to $\partial\Omega$. Furthermore, 
we assume 
$\Delta d\le -c_{0}$ in $\Omega\cap B_R$, 
for some positive $c_{0}$ and $R$. 
Set, for some 
$r<R^2/4$,  
$$G_r=\{(x',x_{n})\in \Omega:\,  x'\in B'_{\sqrt{r}},\, 0< d < r\},$$
and, for some $\alpha \in (0,1)$, 
\begin{align*}
w=Ad^{\alpha}+B|x'|^{2},
\end{align*}
where $A$ and $B$ are constants such that 
\begin{equation}\label{eq-choice-A-B}
A\geq\frac{\tau}{r^{\alpha}}|f|_{L^{\infty}}, \quad B=\frac{1}{r}|f|_{L^{\infty}},\end{equation} 
for some large constant $\tau\ge 1$ to be determined. 
Then $f\leq w$ on $\partial G_r.$ 

A straightforward calculation yields 
\begin{align*}
w_{a} & = A\alpha d^{\alpha-1} d_{a}+2Bx_{a},\\
w_{n} & = A\alpha d^{\alpha-1}d_{n},
\end{align*}
and 
\begin{align*}
w_{ab} & = A\alpha(\alpha-1)d^{\alpha-2} d_{a}d_{b}+A\alpha d^{\alpha-1} d_{ab}+2B\delta_{ab},\\
w_{an} & = A\alpha(\alpha-1)d^{\alpha-2} d_{n}  d_{a}+A\alpha d^{\alpha-1}d _{na},\\
w_{nn} & = A\alpha(\alpha-1)d^{\alpha-2}d_{n}^{2}+A\alpha d^{\alpha-1}d_{nn}.
\end{align*}
Then, 
\begin{align*}
|\nabla w|^2&=A^2\alpha^2d^{2\alpha-2}+4AB\alpha d^{\alpha-1}x'\cdot\nabla_{x'}d
+4B^2|x'|^2,\\
\Delta w&=A\alpha(\alpha-1)d^{\alpha-2}+A\alpha d^{\alpha-1}\Delta d+2(n-1)B,\end{align*}
and 
\begin{align*}
w_iw_jw_{ij}&=A^3\alpha^3(\alpha-1)d^{3\alpha-4}
+4A^2B\alpha^2(\alpha-1)d^{2\alpha-3}x'\cdot\nabla_{x'}d\\
&\qquad+2A^2B\alpha^2d^{2\alpha-2}|\nabla_{x'}d|^2
+4AB^2\alpha(\alpha-1)d^{\alpha-2}(x'\cdot\nabla_{x'}d)^2\\
&\qquad+4AB^2\alpha d^{\alpha-1}[2x'\cdot\nabla_{x'}d+x_ax_bd_{ab}]
+8B^3|x'|^2.
\end{align*}
By taking $\alpha\in (0,1)$ and a straightforward calculation, we obtain
\begin{align*}
&A\alpha(\alpha-1)d^{\alpha-2}-\frac{w_iw_j}{1+|\nabla w|^2}w_{ij}\\
&\qquad\le \frac{4AB^2\alpha d^{\alpha-1}[2|x'\cdot\nabla_{x'}d|+|x_ax_bd_{ab}|]}
{1+A^2\alpha^2d^{2\alpha-2}+4AB\alpha d^{\alpha-1}x'\cdot\nabla_{x'}d
+4B^2|x'|^2}.\end{align*}
In fact, the numerator in the left-hand side is given by 
\begin{align*} 
&A\alpha(\alpha-1)d^{\alpha-2}
+4AB^2\alpha(\alpha-1)d^{\alpha-2}[|x'|^2-(x'\cdot\nabla_{x'}d)^2]\\
&\qquad-2A^2B\alpha^2d^{2\alpha-2}|\nabla_{x'}d|^2-8B^3|x'|^2\\
&\qquad-4AB^2\alpha d^{\alpha-1}[2x'\cdot\nabla_{x'}d+x_ax_bd_{ab}].\end{align*}
The first four terms are nonpositive. 
By dropping some positive terms in the denominator and some rearrangements, we get 
\begin{align*}
\Delta w-\frac{w_iw_j}{1+|\nabla w|^2}w_{ij}+\frac{n}{w}
&\le A d^{\alpha-1}\bigg\{\alpha\Delta d+2(n-1)A^{-1}Bd^{1-\alpha}
+\frac{n}{A^2}d^{1-2\alpha}\\
&\qquad+\frac{4(A^{-1}Bd^{1-\alpha})^2[2|x'\cdot\nabla_{x'}d|+|x_ax_bd_{ab}|]}
{\alpha-4A^{-1}B d^{1-\alpha}|x'\cdot\nabla_{x'}d|}\bigg\}.\end{align*}
By the choice of $A$ and $B$ in \eqref{eq-choice-A-B} and $d<r$, we have 
\begin{align*}
\Delta w-\frac{w_iw_j}{1+|\nabla w|^2}w_{ij}+\frac{n}{w}
&\le A d^{\alpha-1}\bigg\{\alpha\Delta d+\frac{2(n-1)}{\tau}
+\frac{n}{\tau^2}r^{2\alpha}|f|_{L^\infty}^{-2}d^{1-2\alpha}\\
&\qquad+\frac{4[2|x'\cdot\nabla_{x'}d|+|x_ax_bd_{ab}|]}
{\tau(\alpha\tau-4|x'\cdot\nabla_{x'}d|)}\bigg\}.\end{align*}
Note $\Delta d\le -c_0$ in $G_r$. We take $\alpha=1/2$. By taking $\tau$ large, we obtain 
$$\Delta w-\frac{w_iw_j}{1+|\nabla w|^2}w_{ij}+\frac{n}{w}\le 0\quad\text{in }G_r.$$
We can apply the maximum principle and obtain 
$$f\leq Ad^{\alpha}+B|x'|^{2}\quad\text{in }G_r.$$ 
By taking $x'=0$, we conclude $f({0},x_{n})\leq Ad^{1/2}$  for any $x_n\in (0,d)$.

In general, we consider 
$$w=Ad^\alpha+B|x'-x_0'|^2,$$ 
and conclude $f(x'_0,x_{n})\leq Ad^{1/2}$  for any $x_n\in (0,d)$.
\end{proof}

We now prove a regularity result in convex domains with singularity. 

\begin{theorem}\label{thrm-local-reg-peiece-mean-conv-dom}
Let $\Omega\subset \mathbb{R}^{n}$ be a bounded convex domain such that, 
in a neighborhood of $x_0\in \partial\Omega$,
$\partial\Omega$  consists of finitely many $C^{2}$-hypersurfaces
$S_i$ intersecting at $x_0$ with $H_{S_i} > 0$, and let $f\in C(\bar\Omega)\cap C^\infty (\Omega)$ 
be the solution of \eqref{eq-main}. Then, $f\in C^{1/2}(\bar\Omega\cap B_r(x_0))$ and 
$$[f]_{C^{1/2}(\bar\Omega\cap B_r(x_0))} \leq C,$$
where $r$ and $C$ are positive constants depending only on 
$n$,  $H_{S_i}$ and the geometry of $\Omega$.
\end{theorem}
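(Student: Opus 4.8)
The plan is to combine the local growth estimate near $x_0$ coming from Lemma \ref{lemma-growth-rate-1/2} with the local regularity machinery of Theorem \ref{thrm-regl-growth-rate}. First I would note that, since $\Omega$ is convex and is (locally near $x_0$) the intersection of the regions bounded by the hypersurfaces $S_i$, each $S_i$ may be completed to a bounded convex $C^2$-domain $\Omega_i\supset\Omega$ (at least in a fixed neighborhood of $x_0$) with $H_{\partial\Omega_i}>0$ on the relevant portion. By Theorem \ref{thrm-ConvexDom}, the solution $f$ on $\Omega$ is concave, and in particular $\Delta f\le 0$ in $\Omega$; moreover $f>0$ in $\Omega$. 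The key geometric point is that for $x\in\Omega$ near $x_0$, the distance $d(x)=\operatorname{dist}(x,\partial\Omega)$ is realized on one of the pieces $S_i$, so that $d(x)=\min_i d_i(x)$ where $d_i$ is the distance to $S_i$ (equivalently to $\partial\Omega_i$), and hence $d(x)\le d_i(x)$ for every $i$.

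Next I would apply Lemma \ref{lemma-growth-rate-1/2} on each $\Omega_i$: since $\partial\Omega_i$ is $C^2$ with $H_{\partial\Omega_i}>0$ near $x_0$, and since $f$ (extended by being the solution on the smaller domain $\Omega\subset\Omega_i$) satisfies $f\le$ the solution on $\Omega_i$ by the maximum principle — or more directly, one just reruns the barrier argument of Lemma \ref{lemma-growth-rate-1/2} using the barrier $w=Ad_i^{1/2}+B|x'-x_0'|^2$ built from the distance $d_i$ to the single smooth piece $S_i$, on the corresponding region $G_r^{(i)}\subset\Omega$ — to conclude $f\le C_i d_i^{1/2}$ in $\Omega\cap B_{r_i}(x_0)$. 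Taking the minimum over the finitely many pieces, and using $d=\min_i d_i$, this yields
\begin{equation*}
f\le C\, d^{1/2}\quad\text{in }\Omega\cap B_r(x_0),
\end{equation*}
for a uniform $r>0$ and $C>0$ depending only on $n$, the $H_{S_i}$, and the geometry of $\Omega$. The only subtlety here is making sure the barrier argument in Lemma \ref{lemma-growth-rate-1/2} genuinely only uses the smoothness and positive mean curvature of one boundary piece in a one-sided neighborhood, which it does, since the maximum principle is applied on the set $G_r$ carved out near that piece and the comparison on $\partial G_r$ uses only $|f|_{L^\infty}$.

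With the local bound $f\le Md^{1/2}$ in hand, together with $\Delta f\le 0$ and concavity of $f$ in $\Omega\cap B_r(x_0)$, I would invoke Theorem \ref{thrm-regl-growth-rate} with $\alpha=1/2$: it gives $f\in C^{1/2}(\bar\Omega\cap B_{r/2}(x_0))$ with $[f]_{C^{1/2}(\bar\Omega\cap B_{r/2}(x_0))}\le CM$, which is exactly the asserted estimate after relabeling $r$. The main obstacle I anticipate is the first step: one must check that the pieces $S_i$ can be handled independently near the singular point $x_0$ — i.e., that near $x_0$ the distance function to $\partial\Omega$ really does decompose as the minimum of the distances to the individual smooth sheets, and that Lemma \ref{lemma-growth-rate-1/2}'s barrier construction, which was stated for a globally $C^2$ domain, localizes to a single sheet without interference from the other sheets. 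Convexity of $\Omega$ is what makes this clean, since it forces the sheets to meet at $x_0$ in a way that the relevant nearest-point projections stay on a single $S_i$ for $x$ in a cone-like neighborhood, and the quadratic term $B|x'-x_0'|^2$ in the barrier absorbs the tangential behaviour regardless of which sheet one is near. Once that localization is justified, the rest is a direct citation of the two earlier results.
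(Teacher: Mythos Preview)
Your proposal is correct and follows essentially the same route as the paper: extend each $S_i$ to a convex $C^2$-domain $\Omega_i\supset\Omega$ with $H_{\partial\Omega_i}>0$, use the maximum principle to compare $f$ with the solution $f_i$ on $\Omega_i$, apply Lemma~\ref{lemma-growth-rate-1/2} to each $f_i$ to obtain $f\le C d^{1/2}$ near $x_0$, and then invoke concavity (Theorem~\ref{thrm-ConvexDom}) together with Theorem~\ref{thrm-regl-growth-rate} at $\alpha=1/2$. The paper opts for the comparison-with-$f_i$ version you mention first rather than rerunning the barrier directly on $f$, but the two are equivalent here.
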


\begin{proof} We extend each $S_i$ to form a bounded convex $C^2$-domain $\Omega_i$ 
with $\Omega\subset\Omega_i$ such that 
$(\cap_i\Omega_i)\cap B_R(x_0)=\Omega\cap B_R(x_0)$ and 
$H_{\partial\Omega_i}>0$ on $\partial\Omega_i\cap B_R(x_0)$. 
Let $f_i$ be the solution of \eqref{eq-main} in $\Omega_i$. 
By the maximum principle, we have $f\le f_i$ in $\Omega$. 
By applying Lemma \ref{lemma-growth-rate-1/2} to $f_i$ in $\Omega_i$ near $x_0$
and then restricting to $\Omega$, we have  
$$f \le C d^{{1}/{2}}\quad\text{in }\Omega\cap B_r(x_0),$$ 
where $C$ and $r$ are positive  constants 
depending only on $n$, $H_{\partial\Omega_i\cap B_R}$ and the diameter of $\Omega_i$.
By Theorem \ref{thrm-ConvexDom}, $f$ is concave in $\Omega$. 
Then, we can apply Theorem \ref{thrm-regl-growth-rate} 
and get the desired result.
\end{proof}

Theorem \ref{thrm-reg-peiece-mean-conv-dom} follows easily from 
Theorem \ref{thrm-local-reg-peiece-mean-conv-dom}.

To end this section, we discuss another application of Theorem \ref{thrm-regl-growth-rate}, 
which demonstrates that the H\"older exponent for the regularity can be taken as 
$1/(n+1)$ and $1/i$, for any even integer $i$ between 2 and $n$. 

\begin{theorem}\label{thrm-local-regularity}
Let $\Omega\subset \mathbb{R}^{n}$ be a bounded $C^{n+1,\alpha}$-domain 
with $H_{\partial\Omega}\geq 0$, for some $\alpha\in (0,1)$,
and $f\in C(\bar\Omega)\cap C^\infty (\Omega)$ be the solution of \eqref{eq-main}.
Assume $c_i(x') $ is the first nonzero term in the expansion of $u$ near $0$, 
for some even $i$ between $2$ and $n$, or $i=n+1$. Then, 
$$[f]_{C^{1/i}(\bar\Omega\cap B_R(0))}\le C,$$ 
where $C$ and $R$ are positive constants depending only on $c_i(0)$, 
$n$ and the $C^{n+1, \alpha}$-norm  of $\partial\Omega$ near $0$.
\end{theorem}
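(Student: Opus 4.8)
The plan is to put ourselves in a position to apply Theorem~\ref{thrm-regl-growth-rate} with exponent $\alpha=1/i$, namely to establish the two local facts
\[
f\le M\,d^{1/i}\quad\text{and}\quad \Delta f\le 0\qquad\text{in }\Omega\cap B_r(0)
\]
for suitable $M\ge1$ and $r>0$; both are read off from the boundary expansion of $u$ supplied by Lemma~\ref{lemma-Expension-u}. Set up, as in Section~\ref{sec-mean-convex}, coordinates in which $0\in\partial\Omega$, $e_n$ is the interior normal, $\partial\Omega=\{x_n=\varphi(x')\}$ locally with $\varphi\in C^{n+1,\alpha}$, $\varphi(0)=0$, $\nabla\varphi(0)=0$, and in which the graph of $f$ is represented by a solution $u=u(x',t)$ of \eqref{eq-Intro-Equ}, related to $f$ through \eqref{eq-u}--\eqref{eq-f}. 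Lemma~\ref{lemma-Expension-u} gives $u=u_{n+1}+O(t^{n+1+\alpha})$ with $u_{n+1}$ as in \eqref{b1a-v}. We read the hypothesis that $c_i(x')$ is the first nonzero term near $0$ as saying that $c_j$ vanishes identically near $0$ for every $j<i$ (the odd coefficients always vanish), and also the logarithmic coefficient when $i=n+1$, while $c_i(0)\ne0$. Since $u(0,t)>0$ for small $t>0$ and $u(0,t)=c_i(0)t^i+o(t^i)$, we get $c_i(0)>0$, hence $c_i\ge c_i(0)/2=:c_\ast>0$ on $B'_{R_0}$ for some $R_0>0$; the same holds uniformly with any nearby boundary point playing the role of the origin.

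\emph{Growth estimate.} The vanishing of $c_j$ for $j<i$, together with \eqref{b1a-v} and \eqref{eq-MainEstimate}, yields for $(x',t)$ near $0$ with $t$ small
\[
x_n-\varphi(x')=u(x',t)-\varphi(x')=c_i(x')\,t^i\bigl(1+o(1)\bigr)\ge\tfrac12 c_\ast\,t^i .
\]
As $t=f(x',x_n)$, this gives $f^i\le 2\bigl(x_n-\varphi(x')\bigr)/c_\ast$, and since $\varphi\in C^2$ with $\nabla\varphi(0)=0$ we have $x_n-\varphi(x')\le C\,d(x)$ near $0$; hence $f\le M\,d^{1/i}$ on $\Omega\cap B_r(0)$ for suitable $M\ge1$ and small $r$ (enlarging $M$ to absorb the region where $d$ is bounded below, and centering the expansion at the nearest boundary point of $x$ to make the bound uniform).

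\emph{Sign condition.} Inverting the expansion, which is legitimate since $c_i\ge c_\ast>0$, we write, with $s:=x_n-\varphi(x')$ and $b(x'):=c_i(x')^{-1/i}$,
\[
f(x)=b(x')\,s^{1/i}+R(x),\qquad R(x)=O\bigl(s^{1/i+\gamma}\bigr)
\]
for some $\gamma>0$ depending on $n,i,\alpha$, because both the next term of \eqref{b1a-v} and the remainder $O(t^{n+1+\alpha})$ are of order $t^{i+\gamma'}$ with $\gamma'>0$ when $i\le n+1$. The most singular part of $D^2f$ as $s\to0$ is
\[
\tfrac1i\Bigl(\tfrac1i-1\Bigr)\,b(x')\,s^{1/i-2}\,\nabla_x s\otimes\nabla_x s ,
\]
a negative semidefinite rank-one term because $1/i-1<0$; equivalently, with $\nu=\nabla f/|\nabla f|$ (which near $0$ lies within $O(s)$ of the inward unit normal to $\partial\Omega$), $f_{\nu\nu}<0$ in $\Omega\cap B_r(0)$ after shrinking $r$. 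Since \eqref{eq-main} gives $\Delta f=-n/f+f_{\nu\nu}\,|\nabla f|^2/(1+|\nabla f|^2)$, this forces $\Delta f\le -n/f<0$ there. With both facts in hand, Theorem~\ref{thrm-regl-growth-rate} on $\Omega\cap B_r(0)$ with $\alpha=1/i\in(0,1/2]$ gives $f\in C^{1/i}(\bar\Omega\cap B_{r/2}(0))$ and the bound \eqref{eq-HolderEstimate}, with $M$ and $r$ depending only on $c_i(0)$, $n$ and the $C^{n+1,\alpha}$-norm of $\partial\Omega$ near $0$; renaming $r/2=R$ finishes the proof.

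The routine part is the growth estimate. The delicate part is the sign condition: to differentiate the remainder $R$ and the coefficient $b$ twice in the tangential variables one needs Schauder-type control on the tangential derivatives of $u-u_{n+1}$, beyond the sup-bounds \eqref{eq-MainEstimate}, which one extracts from the full statement of Theorem~1.1 in \cite{HanJiang2014}; the same control is needed to justify that $\nu$ is within $O(s)$ of the normal to $\partial\Omega$. Alternatively one can avoid differentiating $b$ by comparing $f$ with barriers of the form $b(x')\,s^{1/i}\pm(\text{lower order})$ to pin down the sign of $f_{\nu\nu}$ near $0$; once $f_{\nu\nu}\le0$ is known the conclusion $\Delta f\le0$ is immediate from the equation. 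I expect this sign estimate, especially in the borderline case $i=n+1$ where $c_{n+1}$ — hence $b$ — is only H\"older continuous in $x'$ (cf.\ Remark~\ref{remark}), to be the main obstacle.
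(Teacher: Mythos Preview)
Your growth estimate is essentially the paper's: both read $u(x',t)-\varphi(x')\ge\tfrac12 c_i(0)\,t^{i}$ off the expansion, convert via $|x_n-\varphi(x')|\le 2d(x)$ to $f\le(4d/c_i(0))^{1/i}$, and feed this into Theorem~\ref{thrm-regl-growth-rate}.

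For the sign condition $\Delta f\le0$ the paper takes a different and more direct route that sidesteps the obstacle you flag. Instead of inverting the expansion to write $f\approx b(x')s^{1/i}$ and then differentiating (which, as you note, asks for two tangential derivatives of $b=c_i^{-1/i}$, unavailable when $i=n+1$ since $c_{n+1}\in C^{\epsilon}$ only), the paper differentiates the relation $x_n=u\bigl(x',f(x)\bigr)$ twice and obtains the \emph{exact} identity
\[
u_t^{2}\,\Delta f \;+\; u_t u_{tt}\,|\nabla f|^{2} \;-\; 2\,\nabla_{x'}u_t\cdot\nabla_{x'}u \;+\; u_t\,\Delta_{x'}u \;=\; 0,
\]
valid at every interior point. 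Everything on the left is a pointwise derivative of the smooth function $u$; no coefficient $c_j$ is ever differentiated. With $c_i$ the first nonzero coefficient one has $u_t\sim i\,c_i\,t^{i-1}>0$, $u_{tt}\sim i(i-1)\,c_i\,t^{i-2}>0$, and $|\nabla f|^{2}\ge f_{x_n}^{2}=u_t^{-2}$, so the term $u_tu_{tt}|\nabla f|^{2}\ge u_{tt}/u_t\sim(i-1)/t$ blows up, while the remaining two terms are lower order (they involve tangential and mixed derivatives of $u$, controlled through the full Theorem~1.1 of \cite{HanJiang2014}). Hence $\Delta f\le0$ for $(x',t)\in B'_r(0)\times(0,r)$ once $r$ is small. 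The expansion enters only to identify the leading behaviour of $u_t$ and $u_{tt}$; it is never inverted, so the low tangential regularity of $c_{n+1}$ causes no trouble. Your barrier idea for $f_{\nu\nu}\le0$ would also work in principle, but this implicit-differentiation identity is the clean mechanism the paper uses, and it dissolves exactly the difficulty you anticipated.
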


Here, for $i=n+1$, $c_{n+1}=c_{n+1, 1}$ if $c_{n+1, 1}(0)$ does not vanish 
and $c_{n+1}=c_{n+1, 0}$ if $c_{n+1, 1}$ vanishes near 0. 

\begin{proof} Let $x_n=\varphi(x')$ be a $C^{n+1, \alpha}$-function 
representing the boundary $\partial\Omega$
near the origin, with $\varphi(0)=0$ and $\nabla\varphi(0)=0$. 
By \eqref{eq-u} and \eqref{eq-f}, we get
\begin{align*}
1 =u_{t}f_{x_{n}},\quad
0 =u_{x'}+u_{t}f_{x'}.
\end{align*}
and
\begin{align*}
0= & u_{tt}f_{x_n}^{2} +u_t f_{x_n x_n},\\
0 =& u_{x_\alpha x_\alpha}+ 2u_{x_\alpha t}f_{x_\alpha}
+ u_{tt}f_{x_\alpha}^{2}+u_{t}f_{x_\alpha x_\alpha}.
\end{align*} 
Hence, $$u_t^2 \Delta f +u_t u_{tt}|\nabla f|^2 
-2\nabla_{x'}u_{t}\nabla_{x'}u+u_{t} \Delta_{x'}u=0.$$
Since $c_i(x') $ is the the first nonzero term in the expansion of $u$, 
by taking $r$ small  depending only on $c_i(0)$, $n$ and 
the $C^{n+1, \alpha}$-norm  of $\varphi$  near $0$, 
we have $\Delta f \le 0$, for  $(x',t) \in B'_r(0) \times(0,r)$.

Next, we verify $f \le C d^{1/i}$ in a neighborhood of $0$. 
By taking $r$ small, we have 
$$|u(x',t)-\varphi(x')|\ge \frac{1}{2}c_i(0) t^{i}\quad\text{for  any }(x',t) \in B'_r(0) \times(0,r).$$ 
Note $x_n=u$ and $t=f$. 
It is easy to verify that  $|x_n-\varphi(x')|\le 2 d(x)$ for $x$ sufficiently small. 
Hence, by taking $r$ sufficiently small and $R = c_i(0)r^{i}/2$, 
we obtain
$$ f\le\left(\frac{4}{c_i(0)} d\right)^{\frac{1}{i}}\quad\text{in }\Omega\cap B_R.$$ 
We then have the desired estimate by Theorem \ref{thrm-regl-growth-rate}.
\end{proof}

\section{An Equivalent Form of the Minimal Surface Equation}
\label{sec-EquivalentEquation}

Let $\Omega$ be a bounded domain and 
$f\in C(\bar\Omega)\cap C^\infty (\Omega)$ be the solution of \eqref{eq-main}. 
Set 
$$w=\frac{1}{4}f^2.$$ Then, $u$ satisfies 
\begin{align}\label{FM1}\begin{split}
\Delta w - \frac{w_i w_j}{w+|D w|^2}u_{ij} +\frac{w}{2 w+ 2|Dw|^2} +\frac{n-1}{2}&=0 \quad\text{in }\Omega,\\
w&=0 \quad \text{on }\partial\Omega,\\
w&>0 \quad\text{in }  \Omega.
\end{split}
\end{align}

The equation in \eqref{FM1} for $n=2$ appears in the study of Chaplygin gas. See the equation (22) 
in \cite{Serre2009}. (The equation in \eqref{eq-main} for $n=2$ is the equation (24) in \cite{Serre2009}.)

Concerning \eqref{FM1}, we have the following global regularity for its solutions. Compare with Theorem 6.1 
in \cite{Serre2009}. 

\begin{theorem}\label{thrm-reg-piece-mean-conv-dom-u}
Let $\Omega\subset \mathbb{R}^{n}$ be a bounded convex domain 
which is the intersection of finitely many bounded convex $C^{2}$-domains $\Omega_i$ 
with $H_{\partial\Omega_i} > 0$, and  let 
$w\in C(\bar\Omega)\cap C^\infty (\Omega)$ be the solution of \eqref{FM1}. 
Then $w\in C^{0,1}(\bar\Omega)$, and
$$|w|_{C^{0,1}(\bar{\Omega})} \leq C,$$
where $C$ is a positive constant depending only on $n$, 
$H_{\partial \Omega_i}$ and the diameter of $\Omega_i$.
\end{theorem}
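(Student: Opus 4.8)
The plan is to deduce Theorem~\ref{thrm-reg-piece-mean-conv-dom-u} from Theorem~\ref{thrm-reg-peiece-mean-conv-dom} by exploiting the substitution $w=\tfrac14 f^2$, which converts the H\"older regularity of $f$ with exponent $1/2$ into Lipschitz regularity of $w$. First I would record that, by Theorem~\ref{thrm-reg-peiece-mean-conv-dom} applied to the given domain $\Omega$, the solution $f$ of \eqref{eq-main} satisfies $f\in C^{1/2}(\bar\Omega)$ with $[f]_{C^{1/2}(\bar\Omega)}\le C$ for a constant $C$ depending only on $n$, the $H_{\partial\Omega_i}$, and the diameters of the $\Omega_i$; moreover $|f|_{L^\infty(\Omega)}\le\operatorname{diam}(\Omega)$ by the maximum principle. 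Since $w=\tfrac14 f^2$ and $w=0$ on $\partial\Omega$, the $L^\infty$ bound $|w|_{L^\infty(\bar\Omega)}\le\tfrac14\operatorname{diam}(\Omega)^2$ is immediate, so the only thing left is the Lipschitz seminorm.

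\smallskip

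For the Lipschitz bound I would argue as follows. Fix $x_1,x_2\in\bar\Omega$. Then
$$|w(x_1)-w(x_2)|=\tfrac14\,|f(x_1)^2-f(x_2)^2|=\tfrac14\,|f(x_1)+f(x_2)|\,|f(x_1)-f(x_2)|.$$
Bounding $|f(x_1)+f(x_2)|\le 2\operatorname{diam}(\Omega)$ is not enough, since $|f(x_1)-f(x_2)|$ only controls $|x_1-x_2|^{1/2}$. The right move is to use the sharper growth rate: by Theorem~\ref{thrm-main} (cf.\ \eqref{eq-growth-rate}), $f(x)\le C\,d(x)^{1/2}$, so $f(x_1)+f(x_2)\le C\big(d(x_1)^{1/2}+d(x_2)^{1/2}\big)$. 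Assuming without loss of generality $d(x_1)\ge d(x_2)$, split into the two cases of the proof of Theorem~\ref{thrm-main}. If $|x_1-x_2|\ge d(x_1)$, then $f(x_1),f(x_2)\le C|x_1-x_2|^{1/2}$, so $|w(x_1)-w(x_2)|\le C|x_1-x_2|^{1/2}\cdot C|x_1-x_2|^{1/2}=C|x_1-x_2|$. If $|x_1-x_2|<d(x_1)$, then the segment joining $x_1,x_2$ lies in $B_{d(x_1)}(x_1)\subset\Omega$, and on that ball $f$ is smooth; applying the mean value theorem to $w=\tfrac14 f^2$ gives $|w(x_1)-w(x_2)|\le\sup_{B_{d(x_1)}(x_1)}|\nabla w|\cdot|x_1-x_2|=\tfrac12\sup|f\,\nabla f|\cdot|x_1-x_2|$. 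Now $|f\,\nabla f|=\tfrac12|\nabla f^2|$, and Lemma~\ref{lemma-Df=0} gives $f^n\sqrt{1+|\nabla f|^2}\le|f|_{L^\infty}^n$, hence $f|\nabla f|\le f\cdot f^{-n}|f|_{L^\infty}^n\sqrt{\cdots}$; more cleanly, $|\nabla f^2|=2f|\nabla f|\le 2|f|_{L^\infty}^{?}\dots$ — the cleanest route is $f^2|\nabla f|^2\le f^2(1+|\nabla f|^2)\le |f|_{L^\infty}^{2n}f^{2-2n}$, which is not bounded for $n\ge 2$; so instead I would note $n=2$ is not assumed, and use directly that on $\Omega\cap B_r(x_0)$ near any boundary point one has, by Theorem~\ref{thrm-regl-growth-rate}\eqref{eq-GradientEstimate} with $\alpha=1/2$, $|\nabla f^2|_{L^\infty}\le C$, while in the interior $\{d\ge r\}$ one has $|\nabla f^2|=2f|\nabla f|\le C$ by interior gradient estimates together with $|f|_{L^\infty}\le\operatorname{diam}(\Omega)$. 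Either way $|\nabla w|=\tfrac14|\nabla f^2|\le C$ on all of $\Omega$, and the mean value theorem closes the case.

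\smallskip

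So the structure is: (i) invoke Theorem~\ref{thrm-reg-peiece-mean-conv-dom} and the maximum principle for the $L^\infty$ bounds on $f$ and $w$; (ii) establish the uniform bound $|\nabla w|=\tfrac14|\nabla f^2|\le C$ in $\Omega$, combining the boundary gradient estimate \eqref{eq-GradientEstimate} (valid since $f$ is concave by Theorem~\ref{thrm-ConvexDom} and $f\le Cd^{1/2}$ near $\partial\Omega$ by Lemma~\ref{lemma-growth-rate-1/2} applied to the enveloping domains $\Omega_i$) with interior estimates away from $\partial\Omega$; (iii) combine (i) and (ii) via the elementary two-case splitting on $|x_1-x_2|$ versus $d(x_1)$ to get the Lipschitz seminorm, exactly as in the proof of Theorem~\ref{thrm-main} but with $w=\tfrac14 f^2$ in place of $f^{n+1}$. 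The main obstacle is step (ii): one must show $f|\nabla f|$ is genuinely bounded up to $\partial\Omega$, not merely that $f^{1/2}\in C^{1/2}$-type estimates hold; this is precisely where \eqref{eq-GradientEstimate} with exponent $\alpha=1/2$ is indispensable, and where one needs the hypotheses $H_{\partial\Omega_i}>0$ (through Lemma~\ref{lemma-growth-rate-1/2}) and convexity (through the concavity of $f$) rather than just $H_{\partial\Omega}\ge 0$. Tracking the constant's dependence only on $n$, the $H_{\partial\Omega_i}$, and $\operatorname{diam}(\Omega_i)$ is then bookkeeping, since every cited estimate has exactly that dependence.
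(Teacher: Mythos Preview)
Your final structure is correct and coincides with the paper's approach: the paper says Theorem~\ref{thrm-reg-piece-mean-conv-dom-u} follows from the proof of Theorem~\ref{thrm-local-reg-peiece-mean-conv-dom} by invoking Theorem~\ref{thrm-regl-growth-rate} with $\alpha=1/2$ and using the gradient estimate \eqref{eq-GradientEstimate} (i.e., $|\nabla f^{2}|\le C$) in place of the H\"older estimate \eqref{eq-HolderEstimate}, which is exactly what you arrive at in step~(ii). Your detours through Lemma~\ref{lemma-Df=0} and the two-case splitting in~(iii) are unnecessary---once $|\nabla w|=\tfrac14|\nabla f^{2}|\le C$ holds on all of $\Omega$ and $\Omega$ is convex, the mean value theorem along segments gives the Lipschitz bound directly---but they do no harm.
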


\begin{theorem}\label{thrm-reg--mean-conv-dom-u}
Let $\Omega\subset \mathbb{R}^{n}$ be a bounded $C^{n+1,\alpha}$-domain 
with $H_{\partial\Omega} > 0$, for some $\alpha\in (0,1)$, and  let 
$w\in C(\bar\Omega)\cap C^\infty (\Omega)$ be the solution of \eqref{FM1}. 
Then, $w\in C^{(n+1)/2}(\bar\Omega)$ if $n$ is even, and
$w\in C^{(n+1-\varepsilon)/2}(\bar\Omega)$ for any $\varepsilon\in (0,1)$ if $n$ is odd. 
In particular, if $n=2$, $w\in C^{1,1/2}(\bar\Omega)$. 
\end{theorem}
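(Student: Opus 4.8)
The plan is to deduce Theorem \ref{thrm-reg--mean-conv-dom-u} directly from the Hölder regularity of $f$ established in Theorem \ref{thrm-local-regularity} together with the substitution $w=\tfrac14 f^2$. Since $H_{\partial\Omega}>0$ everywhere, by Lin's result (and Lemma \ref{lemma-growth-rate-1/2}) we already know $f\in C^{1/2}(\bar\Omega)$, but to get the sharp exponent we need the finer analysis near boundary points. First I would fix an arbitrary boundary point, which we may take to be the origin, and invoke the boundary expansion of Lemma \ref{lemma-Expension-u} for the associated function $u=u(x',t)$: since $\partial\Omega\in C^{n+1,\alpha}$, the coefficients $c_i(x')$ are well defined, with $c_i=0$ for odd $i\le n$ and $c_{n+1,1}=0$ when $n$ is even. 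The key point is to identify the first nonvanishing coefficient in the expansion of $u(0,t)$, which controls the growth rate of $f$ near the origin via the relations \eqref{eq-u}--\eqref{eq-f}.

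The main structural observation is that $H_{\partial\Omega}(0)>0$ forces $c_2(0)\ne0$: the coefficient $c_2$ is a local term with an explicit expression in $\varphi$ (see \cite{HanJiang2014}), and it is (a positive multiple of) the mean curvature $H_{\partial\Omega}$. Hence, when $n$ is even, the first nonzero coefficient is $c_2(0)>0$, so by Theorem \ref{thrm-local-regularity} with $i=2$ we would get $f\in C^{1/2}(\bar\Omega\cap B_R(0))$ — but this is not yet enough for $w\in C^{(n+1)/2}$. The correct reading is: because $f$ grows \emph{exactly} like $d^{1/2}$ (the precise two-sided bound $C^{-1}d^{1/2}\le f\le Cd^{1/2}$ follows from $c_2(0)>0$, arguing as in Remark \ref{remark} but with $i=2$ in place of $n+1$), we have $w=\tfrac14 f^2\asymp d$, and then one bootstraps: $w$ solves \eqref{FM1}, whose structure is much less singular than \eqref{eq-main}, and one applies standard elliptic boundary regularity (Schauder-type up-to-the-boundary estimates for the uniformly elliptic, non-degenerate equation satisfied by $w$ once $w\sim d$ is known) to upgrade to the claimed exponent. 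Concretely, $w^{1/(n+1)}\cdot(\text{something})$ or, more directly, one shows $f\in C^{1/2}$ with a full expansion $f = c_2(0)^{1/2} d^{1/2} + \text{higher order}$, and squaring gives $w\in C^{(n+1)/2}$ for $n$ even (the exponent $(n+1)/2$ being exactly half the optimal exponent $n+1$ for $f$ in the flat-boundary degenerate case, transported through the square). For $n$ odd, the term $c_{n+1,1}t^{n+1}\log t$ may appear, producing a logarithmic loss, and one settles for $C^{(n+1-\varepsilon)/2}$ for every $\varepsilon\in(0,1)$.

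For the Lipschitz bound in Theorem \ref{thrm-reg-piece-mean-conv-dom-u}, the plan is shorter: Theorem \ref{thrm-reg-peiece-mean-conv-dom} already gives $f\in C^{1/2}(\bar\Omega)$ with $[f]_{C^{1/2}}\le C$, and since $f\ge d$ by the comparison argument in the proof of Theorem \ref{thrm-regl-growth-rate}, we have the two-sided bound $c\,d^{1/2}\le f\le C\,d^{1/2}$ on all of $\bar\Omega$. Then $w=\tfrac14 f^2$ satisfies $|w(x_1)-w(x_2)| = \tfrac14|f(x_1)+f(x_2)|\,|f(x_1)-f(x_2)| \le \tfrac14\bigl(f(x_1)+f(x_2)\bigr)[f]_{C^{1/2}}|x_1-x_2|^{1/2}$, and bounding $f(x_i)+f(x_j)$ by $C(d(x_i)^{1/2}+d(x_j)^{1/2})$ and absorbing one power of $|x_1-x_2|^{1/2}$ against the distances (using that without loss of generality $|x_1-x_2|\le d(x_1)$, else $w(x_i)\le Cd(x_i)\le C|x_1-x_2|$ handles it directly) gives $|w(x_1)-w(x_2)|\le C|x_1-x_2|$. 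Combined with $|w|_{L^\infty}\le C$ this yields the $C^{0,1}$ estimate.

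The hard part will be making the bootstrap in Theorem \ref{thrm-reg--mean-conv-dom-u} precise — that is, converting the growth rate $f\asymp d^{1/2}$ into an honest statement that $w$ has a $C^{(n+1)/2}$ (respectively $C^{(n+1-\varepsilon)/2}$) expansion at the boundary. The cleanest route is probably to not work with $w$ directly but to transfer the full boundary expansion of $u$ (hence of $f$) from Lemma \ref{lemma-Expension-u}: one shows $f(x) = \sqrt{c_2(0)}\,d(x)^{1/2}\bigl(1 + a_1 d^{1/2} + a_2 d + \cdots\bigr)$ near $0$ with the error terms controlled by the regularity of $\partial\Omega$, then $w = \tfrac14 c_2(0) d\,(1+\cdots)^2$ is visibly $C^{(n+1)/2}$ when $n$ is even (the expansion of $u$ runs through order $t^{n+1}$, which translates to order $d^{(n+1)/2}$ in $f^2$), with the $\log$ term in the odd case being the only obstruction to the integer-type exponent. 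The bookkeeping of how many terms of the $u$-expansion survive the inversion $t=f(x',x_n)$ and the squaring is the genuinely technical point; everything else is a direct application of earlier results.
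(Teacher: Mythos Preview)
Your final route — obtain a boundary expansion of $f$ in powers of $\sqrt{d}$ and then square — is correct and is exactly what the paper does. The paper, however, does not invert the $u$-expansion of Lemma~\ref{lemma-Expension-u}; it quotes the $\sqrt{d}$-expansion of $f$ directly from \cite{HanJiang2014} (stated here as Theorem~\ref{thrm-Main-F}): for $n$ even,
\[
f = a_1\sqrt{d} + a_3(\sqrt{d})^3 + \cdots + a_{n-1}(\sqrt{d})^{n-1} + a_n(\sqrt{d})^n + O\big((\sqrt d)^{n+\alpha}\big),
\]
with a $(\sqrt d)^n\log\sqrt d$ term when $n$ is odd, and with a pointwise characterization of the remainder $f-f_n$ as a multiple integral. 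Squaring then gives $w=\tfrac14 f^2$ as a polynomial in $d$ (odd $\times$ odd powers of $\sqrt d$) plus a first half-integer term at order $d^{(n+1)/2}$ (resp.\ a $d^{(n+1)/2}\log d$ term when $n$ is odd), from which $w\in C^{(n+1)/2}$ (resp.\ $C^{(n+1-\varepsilon)/2}$) is immediate. Your ansatz $f=\sqrt{c_2(0)}\,d^{1/2}(1+a_1 d^{1/2}+\cdots)$ has the wrong shape: there is no $d^{1}$ term in $f$; only odd powers of $\sqrt d$ appear up to the first global coefficient. This matters, since it is precisely what makes $w$ a genuine polynomial in $d$ below order $(n+1)/2$.

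One warning about your intermediate suggestion: the ``Schauder bootstrap on the uniformly elliptic, non-degenerate equation for $w$'' does not work and should be dropped. At the boundary $w=0$, so the principal coefficient $\delta_{ij}-\dfrac{w_iw_j}{w+|Dw|^2}$ becomes $\delta_{ij}-\dfrac{w_iw_j}{|Dw|^2}$, which annihilates the normal direction $\nabla w$. The equation \eqref{FM1} is therefore degenerate elliptic at $\partial\Omega$, and standard up-to-the-boundary Schauder theory cannot produce the sharp fractional exponent $(n+1)/2$ (indeed, the paper notes this exponent is optimal even for smooth $\partial\Omega$). The only viable mechanism is the explicit expansion, which encodes the degeneracy precisely.

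Finally, the paragraph you devote to Theorem~\ref{thrm-reg-piece-mean-conv-dom-u} is a separate statement; the paper handles it by the gradient bound \eqref{eq-GradientEstimate} in Theorem~\ref{thrm-regl-growth-rate} with $\alpha=1/2$, not by the $C^{1/2}\times C^{1/2}$ splitting you sketch.
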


In fact, local versions as Theorem \ref{thrm-local-reg-peiece-mean-conv-dom} hold for
both Theorem \ref{thrm-reg-piece-mean-conv-dom-u} and Theorem \ref{thrm-reg--mean-conv-dom-u}. 
Moreover, the regularity in both Theorem \ref{thrm-reg-piece-mean-conv-dom-u} and Theorem \ref{thrm-reg--mean-conv-dom-u}
is optimal in general. Even if the domain $\Omega$ is smooth in Theorem \ref{thrm-reg--mean-conv-dom-u}, 
the regularity of $w$ cannot be improved. 

The proof of Theorem \ref{thrm-reg-piece-mean-conv-dom-u} 
follows from that of Theorem \ref{thrm-local-reg-peiece-mean-conv-dom} by 
employing Theorem \ref{thrm-regl-growth-rate} for $\alpha=1/2$, with 
\eqref{eq-GradientEstimate} replacing \eqref{eq-HolderEstimate}. 

We need to point out that the global Lipschitz property of solutions $u$ of \eqref{FM1} established 
in Theorem \ref{thrm-reg-piece-mean-conv-dom-u} is optimal if domains $\Omega$ admit 
singularity, in which case the solutions $u$ cannot be $C^1$ up to the boundary. 
We demonstrate this by considering $\Omega$ in $\mathbb R^2$. 
Assume that a part of $\partial \Omega$ near $0\in \partial\Omega$ consists of curves $c_{1}$ and $c_{2}$ 
and that the tangent lines $l_1$ and $l_2$ of $c_{1}$ and $c_{2}$ intersect  at $0$ 
with an angle $\alpha \pi$, with $0<\alpha<1.$ 
If $w$ is $C^1$ up to the boundary near 0, then $\nabla w(0)=0$  
by the linear independence of $l_{1}$ and $l_{2}$.  
However by the local expansion of $w$ (see below),  we have 
$$\frac{\partial w}{\partial \nu}=\frac{1}{2H_i}\quad\text{on }c_{i}\setminus \{0\},$$ 
where $H_i$ is the curvature of $c_{i}.$ This leads to a contradiction.

We now discuss briefly the global regularity of solutions of \eqref{FM1} if the boundary $\partial\Omega$ 
is smooth. First, we recall a result established in \cite{HanJiang2014}. 

Set, 
for $n$ even, 
\be\label{b1a}
f_n=a_1\sqrt{d}+a_3(\sqrt{d})^3+\cdots+a_{n-1}(\sqrt{d})^{n-1}
+ a_{n}(\sqrt{d})^n,\ee
and, for $n$ odd, 
\be\label{b1b}f_n=a_1\sqrt{d}+a_3(\sqrt{d})^3+\cdots+a_{n-2}(\sqrt{d})^{n-2} 
+a_{n,1}(\sqrt{d})^n\log \sqrt{d}+ a_{n,0}(\sqrt{d})^n,\ee
where $a_i$ and $a_{i,j}$ are functions on $\partial\Omega$. For example,  
$$
a_1=\sqrt{\frac{2}{H}}.
$$
The following result is a special case in Theorem 7.1 in \cite{HanJiang2014} by taking $k=n$.

\begin{theorem}\label{thrm-Main-F}
Let $\Omega$ be a bounded $C^{n+1,\alpha}$-domain in $\mathbb{R}^n$ with
$H_{\partial \Omega}>0$, for some $\alpha\in (0,1)$, and $(y',d)$ be the 
principal coordinates near $\partial\Omega$. Suppose that 
$f\in C(\bar\Omega)\cap C^\infty(\Omega)$ is a solution of 
\eqref{FM1}. Then, 
there exist functions  $a_i, a_{i,j}\in C^{n-i, \epsilon}(\partial\Omega)$, 
for $i=1, 3, \cdots, n$ and any $\epsilon\in (0,\alpha)$, 
and a positive constant $d_0$ such that, for $f_k$ defined as in \eqref{b1a} or \eqref{b1b}, 
for any $m=0, 1, \cdots, n$, 
and any 
$\epsilon\in (0,\alpha)$, 
$$\partial_{\sqrt{d}}^m(f-f_n)\text{ is $C^{\epsilon}$ in $(y',\sqrt{d})
\in \partial\Omega\times [0,d_0]$},$$ and, 
for any $0<d<d_0$, 
\be\label{eq-MainEstimateHigher}|\partial_{\sqrt{d}}^m(f-f_n)|
\le C(\sqrt{d})^{n-m+\alpha},\ee
where $C$ is a positive 
constant depending only on $n$, $\alpha$ and the $C^{n+1,\alpha}$-norm of 
$\partial\Omega$. 
\end{theorem}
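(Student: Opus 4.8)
The final statement to prove is Theorem~\ref{thrm-Main-F}, which is quoted as a special case ($k=n$) of Theorem~7.1 in \cite{HanJiang2014}. Since the excerpt explicitly presents this as a citation, my proof proposal is really a plan for how one \emph{derives} this boundary expansion for the equation \eqref{FM1} from the hyperbolic minimal graph theory already invoked in the excerpt, rather than reproving \cite{HanJiang2014} from scratch.

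The plan is to transport the known expansion for $f$ (the solution of \eqref{eq-main}) through the substitution $w=\tfrac14 f^2$, equivalently $f=2\sqrt{w}$, and to rephrase everything in principal coordinates $(y',d)$ near $\partial\Omega$. First I would recall from \cite{HanJiang2014} (or from the Fefferman--Graham--type analysis underlying Lemma~\ref{lemma-Expension-u}) that the solution $f$ of \eqref{eq-main}, for a $C^{n+1,\alpha}$-domain with $H_{\partial\Omega}>0$, admits a polyhomogeneous expansion in $\sqrt{d}$: schematically $f \sim a_1\sqrt d + a_3 (\sqrt d)^3 + \cdots$, with even powers of $\sqrt d$ (i.e. integer powers of $d$) absent because the formal recursion forces them to vanish, and with a possible $(\sqrt d)^n\log\sqrt d$ term appearing exactly when $n$ is odd (parallel to the $t^{n+1}\log t$ term in \eqref{b1a-v}, the logarithm being the usual obstruction term). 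The leading coefficient $a_1=\sqrt{2/H}$ is computed by plugging $f=a_1\sqrt d+\cdots$ into \eqref{eq-main}: the dominant balance is between $\tfrac{n}{f}$ and the second-order terms acting on $\sqrt d$, and using $|\nabla d|=1$, $d_id_{ij}=0$, $\Delta d = -H_{\partial\Omega}+O(d)$ in principal coordinates, one solves $-\tfrac14 a_1 d^{-3/2}\cdot\tfrac{1}{1+\cdots}+\tfrac12 a_1(-H)d^{-1/2}+\cdots+\tfrac{n}{a_1}d^{-1/2}=0$ at the leading order — this fixes $a_1^2 = 2/H$ after the non-uniform ellipticity factor $1/(1+|\nabla f|^2)$ is accounted for (note $|\nabla f|^2\to\infty$ like $1/d$, so the coefficient $\tfrac{1}{1+|\nabla f|^2}$ of the $f_if_jf_{ij}$-type term contributes at the same order). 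Then $w=\tfrac14 f^2$ has expansion $w\sim \tfrac14 a_1^2 d + \cdots = \tfrac{1}{2H}d+\cdots$, matching $\partial w/\partial\nu = 1/(2H)$ stated in the text.

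The substantive steps, in order, would be: (1) State the precise regularity-of-coefficients claim $a_i,a_{i,j}\in C^{n-i,\epsilon}(\partial\Omega)$ — this comes from the fact that the $i$-th coefficient is obtained by solving a linear elliptic (in fact algebraic plus lower-order) equation on $\partial\Omega$ whose data involves up to $n-i$ derivatives of $\varphi$ (the boundary-defining data), so each coefficient loses $i$ orders of regularity relative to the $C^{n+1,\alpha}$ input; the borderline coefficient $a_n$ (or $a_{n,0},a_{n,1}$) sits only in $C^{0,\epsilon}$. (2) Establish the remainder estimate \eqref{eq-MainEstimateHigher}: write $f = f_n + R$ where $f_n$ is the partial sum, show $f_n$ is an approximate solution in the sense that substituting it into \eqref{eq-main} produces an error $O((\sqrt d)^{n-1+\alpha})$ (one more than needed, because of the $\tfrac1f$ singularity), and then run a barrier/maximum-principle argument — exactly the style of Lemma~\ref{lemma-f-order} and Lemma~\ref{lemma-growth-rate-1/2} but iterated — using comparison functions of the form $C(\sqrt d)^{n+\alpha}$ to sandwich $R$. (3) Upgrade the pointwise remainder bound to the $C^\epsilon$-in-$(y',\sqrt d)$ statement for all derivatives $\partial_{\sqrt d}^m(f-f_n)$, $m\le n$, via interior Schauder estimates in dyadic shells $\{2^{-k-1}<d<2^{-k}\}$ rescaled to unit size, a standard scaling argument; the half-integer powers mean one naturally works in the variable $s=\sqrt d$, in which the equation \eqref{eq-main} becomes a (degenerate but tractable) equation with smooth-in-$s$ structure. (4) Finally translate to the $w$-equation: since $w=\tfrac14 f^2$ and $f=f_n+R$ with $f_n$ a polynomial in $\sqrt d$ and $R=O((\sqrt d)^{n+\alpha})$, we get $w = \tfrac14 f_n^2 + \tfrac12 f_n R + \tfrac14 R^2$, and $\tfrac14 f_n^2$ is precisely a polynomial in $\sqrt d$ of the stated shape (even powers of $\sqrt d$ up to order $\le n+1$ plus possibly a $d^{(n+1)/2}\log$ term when $n$ is odd), while the cross and quadratic terms are absorbed into a remainder $O((\sqrt d)^{n+1+\alpha})$, giving the expansion for $w$ with the exponents shifted up by one.

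The main obstacle is step (2)–(3): constructing the right barriers and carrying out the iteration to pin down \emph{all} coefficients $a_1,a_3,\dots,a_n$ (not just the leading one) while controlling the non-uniform ellipticity — the coefficient matrix $a_{ij}(\nabla f) = \delta_{ij}-\tfrac{f_if_j}{1+|\nabla f|^2}$ degenerates in the normal direction as $d\to0$ (the normal-normal entry behaves like $d$), so the equation is genuinely singular-degenerate at the boundary and the naive maximum principle must be applied with care to weighted comparison functions; this is exactly the technical heart of \cite{HanJiang2014}. A secondary subtlety is correctly identifying \emph{when} the logarithmic term switches on: it is governed by whether $n$ (the exponent at which the formal power-series recursion first hits a resonance) is even or odd, which in turn traces back to the integer-versus-half-integer nature of the indicial roots $p=1$ versus $p=\tfrac12$ of the linearized operator, and one must check the parity bookkeeping against the $t^{n+1}\log t$ phenomenon in \eqref{b1a-v} — the shift by one between the $u$-picture (powers of $t$, cutoff at $n+1$) and the $f$-picture (powers of $\sqrt d$, cutoff at $n$) is the source of the apparent index mismatch and should be reconciled explicitly.
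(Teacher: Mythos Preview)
The paper gives no proof of this theorem: it is simply quoted as the special case $k=n$ of Theorem~7.1 in \cite{HanJiang2014}, and you correctly identify this. Your sketch of how such a boundary-expansion result is obtained (formal recursion for the coefficients $a_i$, barrier arguments for the remainder, rescaled interior Schauder estimates in dyadic shells, working in the variable $s=\sqrt d$) is a fair outline of the standard strategy and is, in broad strokes, what \cite{HanJiang2014} carries out.

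One point of confusion worth flagging: your step~(4), translating the expansion from $f$ to $w=\tfrac14 f^2$, is not needed for the theorem as stated. Despite the reference to \eqref{FM1} in the hypothesis, the expansion $f_n$ in \eqref{b1a}--\eqref{b1b} has leading term $a_1\sqrt d$ with $a_1=\sqrt{2/H}$, which is the asymptotic of the solution of \eqref{eq-main}, not of \eqref{FM1} (whose solution behaves like $\tfrac{1}{2H}d$ near $\partial\Omega$, as the paper itself notes via $\partial w/\partial\nu=1/(2H)$). So the ``$f$'' in Theorem~\ref{thrm-Main-F} is really the minimal-graph solution of \eqref{eq-main}; the passage to $w$ you describe is what the paper then uses \emph{after} Theorem~\ref{thrm-Main-F} to deduce Theorem~\ref{thrm-reg--mean-conv-dom-u}. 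Your instinct to do that translation is correct, but it belongs to the next step, not to the proof of this theorem.
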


In fact, the remainder $f-f_n$ can be characterized by a multiple integral of multiplicity $n$. 
Then, with such a characterization and the explicit expression of $f_n$ in \eqref{b1a} and \eqref{b1b}, 
Theorem \ref{thrm-reg--mean-conv-dom-u} follows easily. 


\end{document}